\numberwithin{equation}{section}
\pgfplotsset{compat=newest}
\titleformat{\section}[block]{\large\sc\filcenter}{\thesection.}{0.5ex}{}[]
\titleformat{\subsection}[runin]{\bf}{\thesubsection.}{0.5ex}{}[.]
\title{Preconditioning Inverse Problems for Hyperbolic Equations with Applications to Photoacoustic Tomography}
\author{Alexander Beigl$^1$\\{\footnotesize\href{mailto:alexander.beigl@univie.ac.at}{alexander.beigl@univie.ac.at}}
\and Otmar Scherzer$^{1,3}$\\{\footnotesize\href{mailto:otmar.scherzer@univie.ac.at}{otmar.scherzer@univie.ac.at}}
\and Jarle Sogn$^2$\\{\footnotesize\href{mailto:jarle@numa.uni-linz.ac.at}{jarle@numa.uni-linz.ac.at}}
\and Walter Zulehner$^2$\\{\footnotesize\href{mailto:zulehner@numa.uni-linz.ac.at}{zulehner@numa.uni-linz.ac.at}}}
\date{May 29, 2019}
\newtheorem{lemma}{Lemma}[section]
\newaliascnt{proposition}{lemma}
\newaliascnt{corollary}{lemma}
\newtheorem{corollary}[corollary]{Corollary}
\newaliascnt{theorem}{lemma}
\newtheorem{theorem}[theorem]{Theorem}
\newaliascnt{definition}{lemma}
\newtheorem{definition}[definition]{Definition}
\newaliascnt{assumption}{lemma}
\newaliascnt{hypothesis}{lemma}
\newaliascnt{problem}{lemma}
\newtheorem{problem}[problem]{Problem}
\newaliascnt{notation}{lemma}
\newtheorem{notation}[notation]{Notation}
\newaliascnt{example}{lemma}
\newaliascnt{remark}{lemma}
\newtheorem{remark}[remark]{Remark}
\theoremstyle{nonumberplain}
\newtheorem{proof}{Proof}
\newcommand{\R}{\mathds{R}}
\let\RE\Re
\let\Re=\undefined
\DeclareMathOperator{\Re}{\RE e}
\let\IM\Im
\let\Im=\undefined
\DeclareMathOperator{\Im}{\IM m}
\newcommand{\norm}[1]{\left\|#1\right\|}
\newcommand{\set}[1]{\left\{#1\right\}}
\newcommand{\dual}[2]{\left<#1,#2\right>}
\newcommand{\inner}[2]{\left(#1,#2\right)}
\newcommand{\abeta}{\rho}
\newcommand{\vary}{p}
\newcommand{\varlambda}{\mu}
\newcommand{\varx}{w}
\let\ii\i
\renewcommand{\i}{\mathrm i}
\newcommand{\wave}{\mathcal{W}}
\newcommand{\rhs}{f}
\begin{document}

\maketitle
\thispagestyle{empty}
\begin{center}
\parbox[t]{10em}{\footnotesize
\hspace*{-1ex}$^1$Faculty of Mathematics\\
University of Vienna\\
Oskar-Morgenstern-Platz 1\\
A-1090 Vienna, Austria}
\hfil
\parbox[t]{15em}{\footnotesize
\hspace*{-1ex}$^2$Institute of Computational Mathematics\\
Johannes Kepler University Linz\\
Altenbergerstr. 69\\
A-4040 Linz, Austria}
\hfil
\parbox[t]{16em}{\footnotesize
\hspace*{-1ex}$^3$Johann Radon Institute for Computational and Applied Mathematics (RICAM)\\
Altenbergerstr. 69\\
A-4040 Linz, Austria}
\end{center}
\begin{abstract}
 This paper is concerned with \emph{robust} preconditioning of \emph{wave equations constrained} linear 
 inverse problems from \emph{boundary} observation data. The main result of this paper is a concept for 
 regularization parameter robust preconditioning. Analogous concepts have been developed for 
 control problems based on \emph{elliptic} partial equations before.
\end{abstract}

\section{Introduction} \label{sec:introduction}
In this paper we reformulate inverse problems for linear \emph{hyperbolic} equations from indirect \emph{boundary} measurement data 
in the framework of optimal control. 
The optimality system of the resulting hyperbolic PDE-constrained optimization problem is a saddle point problem in 
an infinite dimensional Hilbert space setting. 
We prove well-posedness of the optimality system and provide a formulation that is \emph{regularization parameter robust}. 
In particular, we propose a preconditioner for the continuous system which renders the condition number of the preconditioned 
optimality system to be uniformly bounded with respect to the regularization parameter. 
Using a conformal finite element discretization, the discrete preconditioned system is robust with 
respect to the regularization \emph{and} discretization parameters. 
Analogous concepts have been developed for control problems based on \emph{elliptic} partial equations.
Saddle point formulations for inverse problems of the wave equation have been considered in \cite{CinMue15,CinMue16} before.
The main difference to these papers is that our approach involves an additional regularization term which allows us to 
avoid an ``observability hypothesis'' as stated in \cite{CinMue15,CinMue16}. A further difference is that for our choice of 
finite element discretization spaces, the discrete inf-sup condition of the associated saddle point problem is inherited from the continuous formulation 
(see \autoref{subsec:disc}) and does not need to be assumed.

As prime (numerical) test example we consider the inverse problem of Photoacoustic Tomography (PAT) \cite{WanWu07,Wan09}. 
For this inverse problem we provide a proof of concept of regularization parameter robust preconditioning. The concept is 
flexible and can be applied to generalized problems of PAT such as models taking into account attenuation or variable sound 
speed models. Moreover, at the current state of research we do not make an attempt to be competitive with existing highly 
developed and efficient algorithms in PAT (such as Fast Fourier methods based on backprojection algorithms, see for instance 
\cite{CoxTre10}), because our implementations are highly memory and run-time demanding since they require 
space-time solutions of the wave equation. 
For the sake of completeness we review the basic mathematical model of PAT and the variant, which is considered here: For 
photoacoustic imaging a specimen is illuminated by a short laser pulse. The absorbed electromagnetic energy 
creates an instantaneous heating of the probe which in turn induces an acoustic pressure wave caused by rapid thermal expansion. 
The goal of PAT is to reconstruct the electromagnetic absorption density, which is assumed to be proportional to 
the induced acoustic pressure (denoted by $u$ in the following). 
Mathematically, the propagating pressure wave (denoted by $y$) is typically modeled with the \emph{acoustic wave equation} in $\R^2, \R^3$:
Let $d=2,3$, then the pressure wave solves
\begin{equation}\label{eq:originalpat}
\begin{aligned}
\frac{1}{c^2(x)}y''(x,t) - \Delta y(x,t) &= 0 \quad \text{in}\quad \R^d\times(0,\infty),\\
y(x,0)=u(x),\quad y'(x,0)&=0  \quad \text{in}\quad \R^d.
\end{aligned}
\end{equation}
In PAT the acoustic pressure $y$ is measured over time on a curve $\Gamma$, which does not intersect the support of the specimen 
and lies entirely on one side of $\Gamma$ - with this 
term we have in mind for instance that the measurement devices surround the specimen or that 
the measurements lie on a part of a half plane, which bounds the specimen.
The mathematical problem of PAT consists in reconstructing
$u$ from the knowledge of $y$ on $\Gamma$ over time.
The inverse problem of PAT is therefore an inverse initial source problem for a hyperbolic partial differential equation.

In the forward modeling of PAT we slightly deviate from the standard model \autoref{eq:originalpat}, and consider the wave 
equation in a bounded domain $\Omega \subset \R^d$ over a finite time interval $(0,T)$ and enforce homogeneous Dirichlet boundary 
conditions on $\partial \Omega$.
The deviation from the standard model in PAT is done in order to work on a bounded computational domain.
If $\partial \Omega$ is relatively far away from the specimen of interest we might expect only slight deviations from the 
standard PAT model.
Therefore the PAT problem considered in this paper consists in estimating the absorption density $u : \Omega \to \R$ from boundary 
measurements $z_d$ of $y$ on $\Gamma \times (0,T)$, where $y$ is the solution of 
\begin{equation}\label{eq:patbounded}
\begin{aligned}
\frac{1}{c^2(x)}y''(x,t) - \Delta y(x,t) &= 0 \quad \text{in}\quad \Omega \times(0,T),\\
y(x,t) &=0 \quad \text{in} \quad \partial \Omega \times[0,T],\\
y(x,0)=u(x),\quad y'(x,0)&=0  \quad \text{in}\quad\Omega.
\end{aligned}
\end{equation}
We assume that $\Gamma$ is a closed curve completely contained in the open domain $\Omega$ (see \autoref{fig:L}). 
With respect to optimal control variants of the standard PAT model have been considered for instance also in \cite{ClaKli07}.
For the sake of simplicity, we shall illustrate our ideas here only for the constant sound speed coefficient case, 
the basic concept also applies to the variable sound speed case.

The organization of the paper is as follows. In \autoref{sec:ProblemStatement} we introduce some notation and recall well-known 
results about the wave equation which are needed in order to introduce our optimal control problem in a proper Hilbert 
space setting in \autoref{sec:mf}.
We prove well-posedness of the corresponding optimality system in a regularization parameter robust manner in 
\autoref{th:Brezziconstants}, which is the key element for the design of our robust preconditioner. 
In \autoref{sec:preconditioning} we propose and analyze a robust operator preconditioner for the continuous optimality 
system (see \autoref{th:contpcsystem}). 
We then show well-posedness of the discretized augmented optimality system for our particular choice of finite element 
discretization spaces.
Given such a stable discretization, the preconditioner for the discrete optimality system is derived from the continuous 
one in \autoref{th:discpcsystem}. 
Finally, in \autoref{sec:numerics}, we conclude our work with numerical experiments on the robustness of our preconditioner 
and the proof of concept of applicability for PAT.

\subsection*{Basic notation} 
All along this paper we will use the following notation:
\begin{notation}[Sets] \label{de:not}
 $\emptyset \neq \Omega \subset  \R^d$, $d \geq 1$, denotes an open bounded domain with piecewise Lipschitz boundary 
 $\partial\Omega$. Let $\emptyset \neq \Omega_s$ be an open subset with Lipschitz boundary $\partial \Omega_s$, which 
 is compactly supported in $\Omega$. Moreover, let $\Gamma \subset \partial\Omega_s$ be a measurable subset, see  \autoref{fig:L}.
 For $0<T<\infty$ we define $\mathcal{X}_T := \mathcal{X} \times (0,T)$ where $\mathcal{X} \in \set{\Omega,\partial \Omega_s, \partial\Omega, \Gamma}$.
 \begin{figure}[bht]
  \begin{center}
   \begin{tikzpicture}
    \filldraw[color=green, fill=green, very thick]
          (0,0.125\textwidth) -- (0.25\textwidth,0.125\textwidth) (0.25\textwidth,0.125\textwidth)--(0.25\textwidth,-0.125\textwidth)
          (0.25\textwidth,-0.125\textwidth) -- (0,-0.125\textwidth) (0,-0.125\textwidth) -- (0,0.125\textwidth);
    \filldraw[color=red!60, fill=red!5, thick]  (2.0,0.0) circle (0.06\textwidth);
    \filldraw [color=blue!60, fill=blue!5, thick] plot [smooth cycle, tension=2] coordinates {(1.5,0.2) (2.5,-0.1) (2,-0.6)};
    \draw(2.0,0.06\textwidth)[very thick] arc[radius = 0.06\textwidth, start angle= 90, end angle= 270];
 \draw (0.2\textwidth,0.08\textwidth) node[anchor=west] [color=green]{$\Omega$};
 \draw (0.055\textwidth,0.055\textwidth) node[anchor=west] [color=black]{$\Gamma$}; 
 \draw (2.0,0.5) node[anchor=west] [color=red]{$\Omega_s$};
 \draw (1.7,-0.1) node[anchor=west] [color=blue]{$u$};
\end{tikzpicture}
\end{center}
\caption{\label{fig:L} The wave equation is considered in $\Omega$, $\Gamma$ denotes the measurement set, and $\Omega_s$ 
contains the support of $u$, which is compactly supported in $\Omega_s$.}
\end{figure}
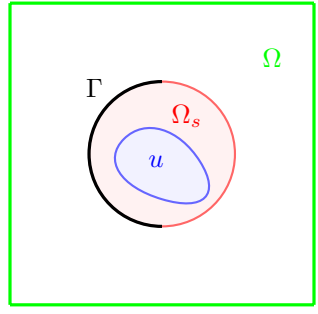
\end{notation}

\section{Weak solutions of the hyperbolic wave equation}\label{sec:ProblemStatement}
In the following we recall the concept of a weak solution of the wave equation. 
\begin{definition}[Weak solution \cite{Lio71}]\label{de:weaksolution} 
Let 
\begin{equation}\label{eq:W}
 W:=\set{ y \in L^2(0,T;H^1_0(\Omega)) : y'\in L^2(0,T;L^2(\Omega)),\, y''\in L^2(0,T;H^{-1}(\Omega)) }
\end{equation}
then the \emph{wave operator} is defined as 
\begin{equation} \label{eq:OperA}
 \begin{aligned}
  \wave: W &\rightarrow L^2(0,T;H^{-1}(\Omega)),\\
         y & \mapsto y'' - \Delta y. 
 \end{aligned}
\end{equation}
For given 
\begin{equation}\label{eq:X}
 (\rhs, y_0, y_1) \in D:= L^2(0,T;L^2(\Omega)) \times H^1_0(\Omega) \times L^2(\Omega),
\end{equation}
a function $y \in  W$ is called a \emph{weak solution} of the wave equation if it satisfies
\begin{equation}\label{eq:operatorwaveequation}
  \wave[y]= f \in L^2(0,T;L^2(\Omega)),\quad y(0)=y_0, \quad y'(0)=y_1.
\end{equation}
\end{definition}
Functions in $W$ satisfy
\begin{equation*}
 (y,y') \in C([0,T];L^2(\Omega))\times C([0,T];H^{-1}(\Omega)),
\end{equation*}
and thus, in particular, the initial conditions \autoref{eq:operatorwaveequation} are well-defined.
General results on existence and uniqueness of weak solutions of the wave equation from \cite{Lio71} and \cite{LioMag72a} are collected 
here for the readers convenience.
\begin{theorem}{\cite[Chapter 4, Theorem 1.1]{Lio71} and \cite[Chapter 3, Theorem 8.2]{LioMag72a}}\label{th:existenceanduniqueness}
 \begin{itemize}
  \item For every $(\rhs,y_0,y_1) \in D$ 
   there exists a unique weak solution $y\in W$ of \autoref{eq:operatorwaveequation}.
  \item The linear mapping $(\rhs,y_0,y_1) \mapsto (y,y')$ 
   is bounded from $D$ into $L^2(0,T;H^1_0(\Omega))\times L^2(0,T;L^2(\Omega))$ 
   and $C([0,T];H^1_0(\Omega))\times C([0,T];L^2(\Omega))$, respectively.
  \end{itemize}
\end{theorem}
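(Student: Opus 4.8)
Since the statement is a classical result quoted from \cite{Lio71} and \cite{LioMag72a}, the plan is to reconstruct the standard \emph{Galerkin} argument together with energy estimates; all three assertions (existence, uniqueness, and the two boundedness/continuity bounds) fall out of this single scheme. First I would fix an orthonormal basis $\set{w_k}_{k\in\N}$ of $L^2(\Omega)$ consisting of eigenfunctions of $-\Delta$ with homogeneous Dirichlet data, so that $\set{w_k}$ is simultaneously orthogonal in $H^1_0(\Omega)$. For each $m$ I would seek $y_m(t)=\sum_{k=1}^m g_{km}(t)\,w_k$ solving the projected equations $\inner{y_m''(t)}{w_k} + \inner{\nabla y_m(t)}{\nabla w_k} = \inner{f(t)}{w_k}$ for $1\le k\le m$, with $g_{km}(0)$ and $g_{km}'(0)$ determined by the $L^2$-projections of $y_0$ and $y_1$. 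This is a linear system of second-order ODEs with constant coefficients, so existence and uniqueness of the $g_{km}$ on $[0,T]$ is immediate.

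The heart of the matter is the energy estimate. Testing the Galerkin equation with $y_m'(t)$ (i.e.\ multiplying by $g_{km}'(t)$ and summing over $k$) produces the identity $\frac{\mathrm{d}}{\mathrm{d}t}\bigl(\norm{y_m'(t)}_{L^2(\Omega)}^2 + \norm{\nabla y_m(t)}_{L^2(\Omega)}^2\bigr) = 2\,\inner{f(t)}{y_m'(t)}$. Integrating in $t$, bounding the right-hand side by $\norm{f(t)}_{L^2(\Omega)}^2 + \norm{y_m'(t)}_{L^2(\Omega)}^2$, and applying Gr\"onwall's inequality yields bounds for $y_m$ in $L^\infty(0,T;H^1_0(\Omega))$ and for $y_m'$ in $L^\infty(0,T;L^2(\Omega))$ that depend only on $\norm{f}_{L^2(0,T;L^2(\Omega))}$, $\norm{y_0}_{H^1_0(\Omega)}$, $\norm{y_1}_{L^2(\Omega)}$ and on $T$. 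These uniform bounds let me extract a subsequence with $y_m \rightharpoonup y$ weakly-$*$ in $L^\infty(0,T;H^1_0(\Omega))$ and $y_m' \rightharpoonup y'$ weakly-$*$ in $L^\infty(0,T;L^2(\Omega))$; passing to the limit in the Galerkin relations shows that $y$ satisfies $\wave[y]=f$ in $L^2(0,T;H^{-1}(\Omega))$, and reading the equation backwards gives $y''=\Delta y + f \in L^2(0,T;H^{-1}(\Omega))$, so $y\in W$. The very same estimates, now applied to the limit, furnish the asserted boundedness of $(f,y_0,y_1)\mapsto(y,y')$ into $L^2(0,T;H^1_0(\Omega))\times L^2(0,T;L^2(\Omega))$.

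For uniqueness I would take the difference of two solutions, which solves $\wave[y]=0$ with vanishing initial data, and argue that its energy stays zero. The subtlety here is that $y'$ only lives in $L^2(0,T;L^2(\Omega))$ while the pairing $\dual{y''}{\cdot}$ acts on $H^1_0(\Omega)$, so $y'$ is \emph{not} an admissible test function directly. I would circumvent this with the classical Ladyzhenskaya device: fix $s\in(0,T)$, test against $\psi(t) = -\int_t^s y(\tau)\,\d\tau$ for $t<s$ and $\psi(t)=0$ for $t\ge s$, which is regular enough, and after integration by parts arrive at an identity forcing $\norm{y(s)}$ to vanish for almost every $s$.

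The main obstacle is upgrading the weak continuity that the Galerkin scheme delivers to the \emph{strong} continuity $y\in C([0,T];H^1_0(\Omega))$ and $y'\in C([0,T];L^2(\Omega))$ claimed in the second bullet (the sharper statement of \cite{LioMag72a}); a priori one obtains only $y\in C_w([0,T];H^1_0(\Omega))$ together with the weaker continuity $(y,y')\in C([0,T];L^2(\Omega))\times C([0,T];H^{-1}(\Omega))$ already noted before the theorem. The standard remedy is a density/regularization argument: one approximates the data by smooth data for which the solution is classical and the energy $t\mapsto\norm{y'(t)}_{L^2(\Omega)}^2+\norm{\nabla y(t)}_{L^2(\Omega)}^2$ is genuinely continuous, then shows this energy map passes to the limit uniformly in $t$, so that strong right- and left-continuity of $t\mapsto(y(t),y'(t))$ in $H^1_0(\Omega)\times L^2(\Omega)$ is inherited. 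Establishing the uniform convergence of the energies, thereby ruling out energy jumps, is the delicate step, and it is precisely what separates the elementary Lions existence theorem from the finer Lions--Magenes continuity statement.
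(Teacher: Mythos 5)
The paper offers no proof of this theorem at all: it is quoted directly from \cite{Lio71} and \cite{LioMag72a} "for the readers convenience," so the only benchmark is the classical argument in those references, which is exactly what you reconstruct. Your sketch — Galerkin approximation in the Dirichlet eigenfunction basis, energy identity plus Gr\"onwall, weak-$*$ compactness, the Ladyzhenskaya-type test function $\psi(t)=-\int_t^s y(\tau)\d\tau$ for uniqueness, and the density/regularization upgrade to continuity in $C([0,T];H^1_0(\Omega))\times C([0,T];L^2(\Omega))$ — is the standard proof and is sound, and you correctly single out the strong-continuity upgrade as the one step that goes beyond the elementary existence argument (only routine details, such as verifying that the weak limit attains the initial data, are left implicit).
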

Now, similar as in \cite{CinMue15, CinMue16}, we collect the set of possible solutions of the wave equation with 
inhomogeneities from the set $D$:
\begin{definition}\label{def:YY} Let $W$ be the set defined in \autoref{eq:W}. 
The \emph{set of possible solutions} of the wave operator is defined as the set 
\begin{equation} \label{eq:YY}
  Y:=\set{y \in W : 
  \exists (\rhs,y_0,y_1) \in D \text{ s.t. }
           \wave[y] = \rhs,\, y(0) = y_0,\, y'(0) = y_1}.
\end{equation}
\end{definition}
It is important for our further considerations, and somehow surprising, that the set $Y$ can be associated with a Hilbert-space topology:
\begin{theorem}[Hilbert-space $Y$]\label{eq:hspace} 
 The set $Y$ is a Hilbert space when associated with the inner product 
\begin{equation}\label{eq:inner_product}
 \inner{y}{\vary}_Y := 
 \inner{\wave[y]}{\wave[\vary]}_{L^2(0,T;L^2(\Omega))} + 
 \inner{y(0)}{\vary (0)}_{H^1_0(\Omega)} + 
 \inner{y'(0)}{\vary'(0)}_{L^2(\Omega)}.
 \end{equation}
\end{theorem}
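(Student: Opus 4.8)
The plan is to identify $Y$ with the data space $D$ from \autoref{eq:X} through the linear \emph{data map}
\[
\Phi : Y \to D, \qquad \Phi(y) := (\wave[y],\, y(0),\, y'(0)),
\]
and to show that $\Phi$ is an isometric isomorphism when $D$ is equipped with its natural product inner product. Since a finite product of Hilbert spaces is again a Hilbert space, the entire Hilbert structure of $Y$ — and in particular completeness — then transports from $D$ across $\Phi$, because \autoref{eq:inner_product} is by construction nothing but the pullback $\inner{y}{\vary}_Y = \inner{\Phi(y)}{\Phi(\vary)}_D$.

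First I would check that $\Phi$ is well defined, i.e.\ that $\Phi(y) \in D$ for every $y \in Y$. That $\wave[y] \in L^2(0,T;L^2(\Omega))$ is immediate from the definition \autoref{eq:YY} of $Y$. The genuinely nontrivial point is that the traces satisfy $y(0) \in H^1_0(\Omega)$ and $y'(0) \in L^2(\Omega)$, rather than merely $y(0) \in L^2(\Omega)$ and $y'(0) \in H^{-1}(\Omega)$, which is all that membership in $W$ guarantees through the embedding $W \hookrightarrow C([0,T];L^2(\Omega)) \times C([0,T];H^{-1}(\Omega))$. This improved trace regularity is supplied by the second assertion of \autoref{th:existenceanduniqueness}: every $y \in Y$ is the weak solution of some data triple in $D$, and for such solutions $(y,y') \in C([0,T];H^1_0(\Omega)) \times C([0,T];L^2(\Omega))$, so the evaluations at $t=0$ land in the required spaces and all three terms of \autoref{eq:inner_product} are finite.

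Next I would establish that $\Phi$ is a bijection. Injectivity is exactly the uniqueness part of \autoref{th:existenceanduniqueness}: if $\Phi(y) = 0$ then $y \in W$ is a weak solution with vanishing data $(0,0,0) \in D$, whence $y = 0$. Surjectivity is the existence part: given any $(\rhs, y_0, y_1) \in D$ there is a weak solution $y \in W$, which lies in $Y$ by \autoref{eq:YY} and satisfies $\Phi(y) = (\rhs, y_0, y_1)$. Bilinearity and symmetry of $\inner{\cdot}{\cdot}_Y$ are inherited term-by-term from the three Hilbert inner products in \autoref{eq:inner_product}, while positive definiteness follows from injectivity of $\Phi$: if $\inner{y}{y}_Y = \norm{\Phi(y)}_D^2 = 0$ then $\Phi(y) = 0$, hence $y = 0$. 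Therefore $\inner{\cdot}{\cdot}_Y$ is a genuine inner product and $\Phi$ is a linear isometry onto $D$.

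It remains to transfer completeness. Let $(y_n)$ be Cauchy in $Y$; then $(\Phi(y_n))$ is Cauchy in the Hilbert space $D$ and converges to some $(\rhs, y_0, y_1) \in D$. Setting $y := \Phi^{-1}(\rhs, y_0, y_1) \in Y$ and using the isometry gives $\norm{y_n - y}_Y = \norm{\Phi(y_n) - (\rhs,y_0,y_1)}_D \to 0$, so $Y$ is complete and hence a Hilbert space. I expect the main obstacle to be the well-definedness step — specifically justifying the $H^1_0(\Omega) \times L^2(\Omega)$ trace regularity of elements of $Y$ — since every other step is a routine transport of structure through the isometric bijection $\Phi$; once \autoref{th:existenceanduniqueness} is invoked correctly, the remainder is essentially bookkeeping.
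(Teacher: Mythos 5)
Your proposal is correct and follows essentially the same route as the paper: both identify $Y$ with the data space $D$ via the bijection $y \mapsto (\wave[y], y(0), y'(0))$ furnished by \autoref{th:existenceanduniqueness} and then transfer completeness through the Cauchy-sequence argument, with your version merely making the isometry language explicit. One small remark: the trace regularity you flag as the ``genuinely nontrivial point'' is in fact immediate from \autoref{eq:YY}, since membership in $Y$ already requires $y(0)=y_0\in H^1_0(\Omega)$ and $y'(0)=y_1\in L^2(\Omega)$ for some data triple in $D$.
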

\begin{proof}
\begin{enumerate}
 \item First, we note that from \autoref{th:existenceanduniqueness} it follows that the mapping 
       $(f,y_0,y_1)\mapsto y$ from $D$ to $Y$, where $y$ is the weak solution of \autoref{eq:operatorwaveequation}, is bijective with inverse $y\mapsto (\wave[y],y(0),y'(0))$.
 \item To see the completeness of $Y$ 
   let $(y_k)_k$ be a Cauchy sequence in $Y$. By the definition of the norm in $Y$ it follows that 
   $(\wave[y_k],y_k(0),y'_k(0))_k$ is a Cauchy sequence in $D$, and therefore possesses a limit in $D$, which we 
   denote by $(\rhs,y_0,y_1) $. According to the first item of this proof there exists $y \in Y$ which solves 
   $\wave[y]= \rhs$, $y(0)=y_0$, $y'(0)=y_1$. 
   It then follows that
   \begin{equation*}
    \norm{y_k-y}_Y^2=\norm{\wave[y_k]-f}^2_{L^2(0,T;L^2(\Omega))}+\norm{y_k(0)-y_0}^2_{H^1_0(\Omega)}+\norm{y'_k(0)-y_1}^2_{L^2(\Omega)}
    \rightarrow 0 \text{ as } k\rightarrow 0.
   \end{equation*}
   Thus the Cauchy sequence is converging and thus the space $Y$ is complete.
\end{enumerate}
\end{proof}

\section{The minimization functional} \label{sec:mf}
We consider now the problem of PAT on a bounded domain $\Omega$ as discussed in \autoref{sec:introduction}. For the sake of simplicity 
of the presentation we assume that the sound speed is constant one in $\Omega$. That is, the considered problem of photoacoustics 
(in operator notation) reads as follows 
\begin{problem}[PAT]\label{pr:weak}
 Given measurement data $z_d\in L^2(0,T;L^2(\Gamma))$. Find $(u,y) \in H_0^1(\Omega)\times Y$ such that $y = z_d$ on 
 $\Gamma \times (0,T)$, and such that 
 \begin{equation} \label{eq:PAT}
  \wave[y]= 0,\quad y(0)=u,\quad y'(0)=0.
 \end{equation}
\end{problem}
The solution of \autoref{pr:weak} appears to be unstable and thus we investigate a regularization technique 
consisting in calculating, for some fixed $\alpha > 0$, a minimizer of the cost functional 
\begin{equation}\label{eq:minimizingfunctional2}
 \begin{aligned}
 \mathcal{T}_\alpha: Y &\rightarrow \R,\\
 y &\mapsto \frac{1}{2}\norm{y-z_d}^2_{L^2(0,T;L^2(\Gamma))} + \frac{\alpha}{2}\norm{y(0)}^2_{H^1_0(\Omega)},
 \end{aligned}
\end{equation}
subject to the constraint
\begin{equation} \label{eq:ynu}
 y \in Y_0:=\set{y \in Y : \wave[y]=0,\, y'(0)=0}.
\end{equation}
In the following we analyze the functional $\mathcal{T}_\alpha$. The first result consists in proving that 
the trace of $y$ is well-defined such that the residual term $\norm{y-z_d}^2_{L^2(0,T;L^2(\Gamma))}$ is well-defined.
\begin{lemma}
 There exists a positive constant $C_{\text{obs}}:=C_{\text{obs}}(\Omega,\Omega_s,T)$ such that
 \begin{equation}\label{eq:continuityobservation}
  \norm{y}_{L^2(0,T;L^2(\Gamma))}\leq C_{\text{obs}}\norm{y}_Y\qquad\text{for all}\quad y\in Y.
 \end{equation}
\end{lemma}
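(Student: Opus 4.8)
The plan is to factor the claimed space-time trace estimate into a temporal regularity bound, coming from \autoref{th:existenceanduniqueness}, and a purely spatial trace inequality on the intermediate Lipschitz surface $\Gamma\subset\partial\Omega_s$. First I would invoke the identification established in the proof of \autoref{eq:hspace}: every $y\in Y$ is the unique weak solution associated with the data $(\wave[y],y(0),y'(0))\in D$, and by the very definition of the inner product \autoref{eq:inner_product} one has $\norm{y}_Y=\norm{(\wave[y],y(0),y'(0))}_D$. Since \autoref{th:existenceanduniqueness} asserts that the solution map $(\rhs,y_0,y_1)\mapsto y$ is bounded from $D$ into $L^2(0,T;H^1_0(\Omega))$, there is a constant $C_1=C_1(\Omega,T)$ with $\norm{y}_{L^2(0,T;H^1_0(\Omega))}\le C_1\,\norm{y}_Y$ for all $y\in Y$.

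Next I would establish the spatial trace inequality. Because $\Omega_s$ is compactly contained in $\Omega$ and has Lipschitz boundary, the restriction of any $v\in H^1_0(\Omega)$ to $\Omega_s$ belongs to $H^1(\Omega_s)$, with $\norm{v}_{H^1(\Omega_s)}\le\norm{v}_{H^1(\Omega)}$, and the trace theorem for Lipschitz domains yields a bounded spatial trace $H^1(\Omega_s)\to L^2(\partial\Omega_s)$. Restricting further to the measurable subset $\Gamma\subset\partial\Omega_s$ cannot increase the $L^2$ norm, and (using the Poincaré inequality to compare the full $H^1$ norm with the $H^1_0(\Omega)$ norm) one obtains a constant $C_2=C_2(\Omega,\Omega_s)$ such that $\norm{v}_{L^2(\Gamma)}\le C_2\,\norm{v}_{H^1_0(\Omega)}$ for every $v\in H^1_0(\Omega)$.

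Finally I would combine the two estimates pointwise in time. Applying the spatial trace inequality to $v=y(t)$ for almost every $t\in(0,T)$, squaring and integrating gives
\[
\norm{y}_{L^2(0,T;L^2(\Gamma))}^2 = \int_0^T \norm{y(t)}_{L^2(\Gamma)}^2 \d t \le C_2^2 \int_0^T \norm{y(t)}_{H^1_0(\Omega)}^2 \d t = C_2^2\,\norm{y}_{L^2(0,T;H^1_0(\Omega))}^2 .
\]
Together with the first step this yields $\norm{y}_{L^2(0,T;L^2(\Gamma))}\le C_2C_1\,\norm{y}_Y$, so the assertion holds with $C_{\text{obs}}:=C_1C_2$, which depends only on $\Omega$, $\Omega_s$ and $T$.

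The one point requiring care is the well-definedness of the space-time trace: since $y\in L^2(0,T;H^1_0(\Omega))$ and the spatial trace is a bounded linear operator, its composition with $y$ is Bochner measurable, so $\norm{y}_{L^2(0,T;L^2(\Gamma))}$ is meaningful and the Fubini-type interchange above is legitimate. I expect this measurability bookkeeping, together with correctly invoking the trace theorem on the \emph{interior} Lipschitz surface $\Gamma$ rather than on $\partial\Omega$, to be the main (though mild) technical obstacle; the rest is a direct chaining of two continuous embeddings.
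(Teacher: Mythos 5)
Your proof is correct and follows essentially the same route as the paper's: a spatial trace estimate on the interior Lipschitz surface $\Gamma\subset\partial\Omega_s$ (trace theorem on $\Omega_s$, restriction to $\Gamma$, comparison of $H^1(\Omega_s)$ with $H^1_0(\Omega)$) chained with the boundedness of the solution map from \autoref{th:existenceanduniqueness}, which identifies $\norm{\cdot}_Y$ with the data norm on $D$. The only inessential difference is that you invoke the $L^2(0,T;H^1_0(\Omega))$ bound of that theorem directly, whereas the paper passes through the $C([0,T];H^1_0(\Omega))$ bound and then estimates the $L^2$-in-time norm by the supremum-in-time norm, picking up a factor of order $\sqrt{T}$.
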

\begin{proof}
 From \autoref{th:existenceanduniqueness} it follows that $Y$ as definedin \autoref{eq:YY} can be represented as
 \begin{equation}\label{eq:imagesolutionoperator}
 Y =\set{ y \in W : (y,y') \in C([0,T];H^1_0(\Omega))\times C([0,T];L^2(\Omega)),\, \wave[y]\in L^2(0,T;L^2(\Omega))},
 \end{equation}
 and moreover the norms
 \begin{equation*}
  \norm{y}_C:=\norm{y}_{C([0,T];H^1_0(\Omega))} + \norm{y'}_{C([0,T];L^2(\Omega))} + \norm{\wave[y]}_{L^2(0,T;L^2(\Omega))}
 \end{equation*}
 and $\norm{\cdot}_Y$ are equivalent. This together with the trace theorem, shows that
 \begin{align*}
   \norm{y}_{L^2(0,T;L^2(\Gamma))}\leq  \norm{y}_{L^2(0,T;L^2(\partial \Omega_s))}&\leq C(\Omega_s)\norm{y}_{L^2(0,T;H^1(\Omega_s))}\\
   &\leq C(\Omega,\Omega_s)\norm{y}_{L^2(0,T;H^1_0(\Omega))}\leq C(\Omega,\Omega_s,T) \norm{y}_{C([0,T];H^1_0(\Omega))},
 \end{align*}
 which gives the assertion.
\end{proof}
In order to solve the constrained minimization problem we will reformulate it as a saddle point problem.

\subsection{The saddle point problem}
Set 
\begin{equation} \label{eq:lambda}
 \Lambda := L^2(0,T;L^2(\Omega)) \times L^2(\Omega)
\end{equation}
equipped with the standard product norm
$\norm{\lambda}^2_\Lambda=\norm{\lambda_1}_{L^2(0,T;L^2(\Omega))}^2+\norm{\lambda_2}_{L^2(\Omega)}^2.$

\begin{definition}[(Augmented) Lagrangian]
 Let $\abeta \geq 0$. The \emph{Lagrangian} ($\abeta=0$), \emph{augmented Lagrangian} ($\abeta > 0$), respectively, 
associated to the minimization functional $\mathcal{T}_\alpha$ from \autoref{eq:minimizingfunctional2} reads as follows
\begin{equation}\label{eq:Lagrangian1}
 \begin{aligned}
  \mathcal{L}_\abeta: Y \times \Lambda &\rightarrow \R, \\
    (y,\lambda) &\mapsto \frac{1}{2}a_{\alpha,\abeta}(y,y) + b(y,\lambda)-l(y),
 \end{aligned}
\end{equation}
where the bilinear forms $a_{\alpha,\abeta}:Y \times Y \rightarrow \R$ and $b:Y \times \Lambda \rightarrow \R$ and the linear form 
$l:Y\rightarrow\R$ are defined by
\begin{equation} \label{eq:bdef1}
 \begin{aligned}
    a_{\alpha,\abeta}(y,\vary)&=\inner{y}{\vary}_{L^2(0,T;L^2(\Gamma))} + \alpha\inner{y(0)}{\vary (0)}_{H^1_0(\Omega)} \\
        & \qquad + 
         \abeta\left[\inner{\wave[y]}{\wave[\vary]}_{L^2(0,T;L^2(\Omega))} + \inner{y'(0)}{\vary'(0)}_{L^2(\Omega)}\right] 
         \text{ for all } y,\vary \in Y,\\
    b(y,\lambda)&=\inner{\wave[y]}{\lambda_1}_{L^2(0,T;L^2(\Omega))} + \inner{y'(0)}{\lambda_2}_{L^2(\Omega)}\text{ for all } y \in Y, \lambda \in \Lambda,\\ 
    l(y)&=\inner{z_d}{y}_{L^2(0,T;L^2(\Gamma))} \text{ for all }y \in Y.
 \end{aligned}
\end{equation}
\end{definition}

The first order optimality conditions of the Lagrangian $\mathcal{L}_\abeta$, 
\begin{equation*}
 \frac{\partial\mathcal{L}_\abeta}{\partial y}(y,\lambda) = \frac{\partial\mathcal{L}_\abeta}{\partial \lambda}(y,\lambda) = 0,
\end{equation*}
can be expressed as a saddle point problem, consisting in finding $(y,\lambda) \in Y \times \Lambda$ satisfying
\begin{equation}\label{eq:saddlepointproblem1}
 \begin{aligned}
  a_{\alpha,\abeta}(y,\vary)+b(\vary ,\lambda) &= l(\vary) & \text{ for all } &\vary \in Y,\\
  b(y,\varlambda )& = 0 &\text{ for all } &\varlambda \in\Lambda.
  \end{aligned}
\end{equation}
To prove existence and uniqueness of a solution of \autoref{eq:saddlepointproblem1} we need to guarantee four Brezzi-conditions 
(see for instance \cite{BofBreFor13}) in an appropriate Hilbert space setting on $Y$ (as defined in \autoref{eq:YY}). 
In particular we investigate $Y$ together with a parameter dependent family of functionals:
For $\abeta > 0$ we introduce 
\begin{equation}\label{eq:Ynorm}
\norm{y}_{Y_{\alpha,\abeta}}^2 := a_{\alpha,\abeta}(y,y) = 
\norm{y}^2_{L^2(0,T;L^2(\Gamma))} + \alpha\norm{y(0)}^2_{H^1_0(\Omega)} + \abeta\left[\norm{\wave[y]}^2_{L^2(0,T;L^2(\Omega))}+\norm{y'(0)}^2_{L^2(\Omega)}\right].
\end{equation} 
The next lemma guarantees that $Y$ associated with the bilinear forms $\inner{\cdot}{\cdot}_{Y_{\alpha,\abeta}}$, induced by 
$\norm{y}_{Y_{\alpha,\abeta}}$ is indeed a Hilbert-space.
\begin{lemma}
For every $\alpha, \abeta > 0$, $\norm{\cdot}_{Y_{\alpha,\abeta}}$ is a norm on $Y$.
\end{lemma}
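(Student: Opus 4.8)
The plan is to verify the three defining properties of a norm---absolute homogeneity, the triangle inequality, and definiteness---for the functional $\norm{\cdot}_{Y_{\alpha,\abeta}}$ induced by the symmetric bilinear form $a_{\alpha,\abeta}$. First I would observe that $a_{\alpha,\abeta}$ is assembled from four terms, each of which is itself an inner product (the $L^2(0,T;L^2(\Gamma))$, $H^1_0(\Omega)$, $L^2(0,T;L^2(\Omega))$ and $L^2(\Omega)$ inner products evaluated on the respective traces and images of $y$ and $\vary$), weighted by the nonnegative coefficients $1$, $\alpha$, $\abeta$ and $\abeta$. A nonnegative linear combination of symmetric positive semidefinite bilinear forms is again symmetric positive semidefinite, so $a_{\alpha,\abeta}$ is a symmetric positive semidefinite bilinear form on $Y$. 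This immediately gives $\norm{y}_{Y_{\alpha,\abeta}} \geq 0$ and $\norm{cy}_{Y_{\alpha,\abeta}} = \abs{c}\,\norm{y}_{Y_{\alpha,\abeta}}$, and---via the Cauchy--Schwarz inequality valid for any positive semidefinite form---the triangle inequality. Hence $\norm{\cdot}_{Y_{\alpha,\abeta}}$ is at least a seminorm, and the only property that requires a genuine argument is definiteness.

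For definiteness the key idea is to compare $\norm{\cdot}_{Y_{\alpha,\abeta}}$ with the norm $\norm{\cdot}_Y$ from \autoref{eq:hspace}. Dropping the nonnegative observation term $\norm{y}^2_{L^2(0,T;L^2(\Gamma))}$ and bounding the remaining weights below by $\min(\alpha,\abeta)$, I obtain
\begin{equation*}
 \norm{y}_{Y_{\alpha,\abeta}}^2 \geq \min(\alpha,\abeta)\left[\norm{\wave[y]}^2_{L^2(0,T;L^2(\Omega))} + \norm{y(0)}^2_{H^1_0(\Omega)} + \norm{y'(0)}^2_{L^2(\Omega)}\right] = \min(\alpha,\abeta)\,\norm{y}_Y^2.
\end{equation*}
Since $\alpha,\abeta>0$, the factor $\min(\alpha,\abeta)$ is strictly positive, so $\norm{y}_{Y_{\alpha,\abeta}} = 0$ forces $\norm{y}_Y = 0$. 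By \autoref{eq:hspace} the functional $\norm{\cdot}_Y$ is a Hilbert-space norm on $Y$ and is therefore definite, whence $y = 0$. This completes the definiteness argument and shows that $\norm{\cdot}_{Y_{\alpha,\abeta}}$ is a norm.

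The substantive content is concentrated entirely in the definiteness of $\norm{\cdot}_Y$, which in turn rests on the uniqueness of weak solutions: $\norm{y}_Y = 0$ means $\wave[y]=0$, $y(0)=0$ and $y'(0)=0$, i.e.\ $y$ is the weak solution associated with the zero data $(0,0,0)\in D$, and \autoref{th:existenceanduniqueness} guarantees this solution is unique and hence equal to $0$. Thus the only real obstacle---showing that vanishing of the form implies $y=0$---has already been dispatched by the existence and uniqueness theorem, while the presence of the extra measurement term poses no difficulty because it only enlarges the functional. The remaining bilinear-algebra steps are routine.
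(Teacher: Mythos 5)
Your proof is correct and takes essentially the same route as the paper: the paper likewise reduces definiteness to the norm $\norm{\cdot}_Y$ of \autoref{eq:hspace} (stating that the $\alpha$- and $\abeta$-weighted part without the observation term is an equivalent norm, where you use the one-sided bound with $\min(\alpha,\abeta)$, which suffices), and handles the observation term as a nonnegative, well-defined addition via \autoref{eq:continuityobservation}. The only difference is presentational, so no further comparison is needed.
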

\begin{proof}
 According to \autoref{eq:hspace} the mapping 
 $$y\mapsto \left(\alpha\norm{y(0)}^2_{H^1_0(\Omega)} + \abeta\left[\norm{\wave[y]}^2_{L^2(0,T;L^2(\Omega))}+\norm{y'(0)}^2_{L^2(\Omega)}\right]\right)^{1/2}$$ 
 is an equivalent norm to $\norm{\cdot}_Y$. The assertion then follows from \autoref{eq:continuityobservation}.
\end{proof}
Moreover, for verifying the Brezzi-conditions we also need the following lemma:
\begin{lemma}
The kernel of the bilinear form $b$, $\mathcal{N}(b):=\set{y\in Y : b(y,\lambda)=0 \text{ for all } \lambda \in \Lambda}$,  satisfies
\begin{equation} \label{eq:null}
\mathcal{N}(b)=Y_0,
\end{equation}
where $Y_0$ is defined in \autoref{eq:ynu}.
\end{lemma}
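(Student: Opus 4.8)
The plan is to establish the set equality $\mathcal{N}(b)=Y_0$ by proving the two inclusions separately, exploiting that the bilinear form $b$ is assembled from the two \emph{full} $L^2$ inner products on the factors of the product space $\Lambda = L^2(0,T;L^2(\Omega)) \times L^2(\Omega)$. Recall that $b(y,\lambda)=\inner{\wave[y]}{\lambda_1}_{L^2(0,T;L^2(\Omega))} + \inner{y'(0)}{\lambda_2}_{L^2(\Omega)}$, so the essential mechanism is nondegeneracy of these inner products together with the freedom to vary $\lambda_1$ and $\lambda_2$ independently.

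The inclusion $Y_0 \subseteq \mathcal{N}(b)$ is immediate: if $y\in Y_0$ then by \autoref{eq:ynu} we have $\wave[y]=0$ and $y'(0)=0$, so both summands of $b(y,\lambda)$ vanish for every $\lambda=(\lambda_1,\lambda_2)\in\Lambda$, whence $y\in\mathcal{N}(b)$. For the reverse inclusion I would argue by a test-function argument. Let $y\in\mathcal{N}(b)$, so that $\inner{\wave[y]}{\lambda_1}_{L^2(0,T;L^2(\Omega))} + \inner{y'(0)}{\lambda_2}_{L^2(\Omega)}=0$ for all $(\lambda_1,\lambda_2)\in\Lambda$. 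Since $\Lambda$ is a product space, the two arguments decouple: choosing $\lambda_2=0$ forces $\inner{\wave[y]}{\lambda_1}_{L^2(0,T;L^2(\Omega))}=0$ for every $\lambda_1\in L^2(0,T;L^2(\Omega))$, and choosing $\lambda_1=0$ forces $\inner{y'(0)}{\lambda_2}_{L^2(\Omega)}=0$ for every $\lambda_2\in L^2(\Omega)$. Taking in particular the admissible choices $\lambda_1=\wave[y]$ and $\lambda_2=y'(0)$ then yields $\norm{\wave[y]}^2_{L^2(0,T;L^2(\Omega))}=0$ and $\norm{y'(0)}^2_{L^2(\Omega)}=0$, hence $\wave[y]=0$ and $y'(0)=0$, which is exactly the defining condition for $y\in Y_0$. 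Combining the two inclusions gives $\mathcal{N}(b)=Y_0$.

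The only point that requires care — and the mild obstacle in an otherwise routine argument — is the verification that the objects $\wave[y]$ and $y'(0)$ actually belong to the test spaces $L^2(0,T;L^2(\Omega))$ and $L^2(\Omega)$, so that they qualify as admissible choices of $\lambda_1$ and $\lambda_2$. This is precisely what membership in $Y$ (as opposed to the larger space $W$) provides: by \autoref{th:existenceanduniqueness} and the representation of $Y$ in \autoref{eq:imagesolutionoperator} one has $\wave[y]\in L^2(0,T;L^2(\Omega))$, while the continuity embedding $(y,y')\in C([0,T];H^1_0(\Omega))\times C([0,T];L^2(\Omega))$ guarantees $y'(0)\in L^2(\Omega)$. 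No regularity beyond the structural definition of $Y$ is needed, and the matching of $\Lambda$ to exactly these two spaces is what makes the kernel collapse onto $Y_0$.
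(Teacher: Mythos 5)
Your proof is correct and follows essentially the same route as the paper: the key step in both is to test $b$ with the admissible choice $\lambda=(\wave[y],y'(0))\in\Lambda$ (admissible precisely because $y\in Y$), forcing $\norm{\wave[y]}^2_{L^2(0,T;L^2(\Omega))}+\norm{y'(0)}^2_{L^2(\Omega)}=0$, while the inclusion $Y_0\subseteq\mathcal{N}(b)$ is trivial. Your intermediate decoupling step (setting $\lambda_1=0$, then $\lambda_2=0$) is harmless but unnecessary, since the single test function already yields a sum of two nonnegative terms equal to zero.
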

\begin{proof}
 Let $y\in\mathcal{N}(b)$, according to the definition of $Y$, \autoref{eq:YY}, 
 $(\wave[y],y'(0))\in L^2(0,T;L^2(\Omega)) \times L^2(\Omega) = \Lambda$ (see \autoref{eq:YY}) it follows from 
 the definition of $b$, \autoref{eq:bdef1}, that
 \begin{equation*}
  b(y,(\wave[y],y'(0)))=\norm{\wave[y]}^2_{L^2(0,T;L^2(\Omega))}+\norm{y'(0)}^2_{L^2(\Omega)}=0,
 \end{equation*}
that is $y\in Y_0$. The other inclusion is trivial. 
\end{proof}

\begin{theorem}[Brezzi conditions]\label{th:Brezziconstants}
Let $\alpha, \abeta > 0$.
\begin{description}
 \item[The 1st Brezzi condition] (boundedness of $a_{\alpha,\abeta}$) holds,
  \begin{equation}\label{eq:ca}
   a_{\alpha,0}(y,\vary)\leq\norm{y}_{Y_{\alpha,\abeta}}\norm{\vary}_{Y_{\alpha,\abeta}}\text{ and } 
   a_{\alpha,\abeta}(y,\vary)\leq\norm{y}_{Y_{\alpha,\abeta}}\norm{\vary}_{Y_{\alpha,\abeta}}\text{ for all } y,\vary \in Y.
  \end{equation}
 \item[The 2nd Brezzi condition] (coercivity of $a_{\alpha,\abeta}$ on the kernel of $b$) holds,
   \begin{equation}\label{eq:kernel}
   a_{\alpha,0}(y,y)=a_{\alpha,\abeta}(y,y)=\norm{y}^2_{Y_{\alpha,\abeta}} \text{ for all } y\in \mathcal{N}(b).
   \end{equation}
   Moreover,
   \begin{equation*}
    a_{\alpha,\abeta}(y,y)=\norm{y}^2_{Y_{\alpha,\abeta}} \text{ for all } y\in Y.
   \end{equation*}
 \item[The 3rd Brezzi condition] (boundedness of $b$) holds,
  \begin{equation}\label{eq:cb}
   b(y,\lambda)\leq c_b\norm{y}_{Y_{\alpha,\abeta}}\norm{\lambda}_\Lambda \text{ for all } y\in Y,\,\lambda\in\Lambda\text{ with }c_b=\frac{1}{\sqrt{\abeta}}.
   \end{equation}
 \item[The 4th Brezzi condition] ($b$ satisfies the inf-sup condition) holds,
   \begin{equation}\label{eq:brezzicoercivity}
    \sup_{0 \neq y \in Y} \frac{b(y,\lambda)}{\norm{y}_{Y_{\alpha,\abeta}}} \geq k_0\norm{\lambda}_\Lambda \text{ for all } 
    \lambda\in\Lambda\text{ with } k_0=\frac{1}{\sqrt{C_{\text{obs}}^2+\abeta}},
   \end{equation}
   where $C_{\text{obs}}=C_{\text{obs}}(\Omega,\Omega_s,T)$ is the constant from \autoref{eq:continuityobservation}.
\end{description}
\end{theorem}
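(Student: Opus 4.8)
The plan is to verify the four conditions by exploiting that $a_{\alpha,\abeta}(\cdot,\cdot)$ is precisely the inner product inducing the norm $\norm{\cdot}_{Y_{\alpha,\abeta}}$ from \autoref{eq:Ynorm}, combined with the solution-operator characterization of $Y$ in \autoref{th:existenceanduniqueness} and the trace estimate \autoref{eq:continuityobservation}. The first three conditions reduce to repeated use of the Cauchy--Schwarz inequality, while the inf-sup condition requires an explicit test function; this last step is the only genuine difficulty.

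For the first condition I would note that $a_{\alpha,\abeta}$ is a bona fide inner product on $Y$ (by the lemma establishing that $\norm{\cdot}_{Y_{\alpha,\abeta}}$ is a norm), so Cauchy--Schwarz yields $a_{\alpha,\abeta}(y,\vary)\leq\norm{y}_{Y_{\alpha,\abeta}}\norm{\vary}_{Y_{\alpha,\abeta}}$ with constant one. Since $a_{\alpha,0}$ arises from $a_{\alpha,\abeta}$ by dropping the nonnegative $\abeta$-weighted terms, it is positive semidefinite and satisfies $a_{\alpha,0}(y,y)\leq a_{\alpha,\abeta}(y,y)=\norm{y}^2_{Y_{\alpha,\abeta}}$; Cauchy--Schwarz for this semi-inner product then gives the first inequality in \autoref{eq:ca}. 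The second condition is immediate from the definitions: on $\mathcal{N}(b)=Y_0$ (see \autoref{eq:null}) one has $\wave[y]=0$ and $y'(0)=0$, so the $\abeta$-terms in both $a_{\alpha,\abeta}(y,y)$ and $\norm{y}^2_{Y_{\alpha,\abeta}}$ vanish and all three quantities coincide, giving coercivity with constant one; the displayed ``moreover'' is merely the definition \autoref{eq:Ynorm}.

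For the third condition I would apply Cauchy--Schwarz to each summand of $b$ and then the discrete Cauchy--Schwarz in $\R^2$ to obtain $b(y,\lambda)\leq\sqrt{\norm{\wave[y]}^2_{L^2(0,T;L^2(\Omega))}+\norm{y'(0)}^2_{L^2(\Omega)}}\,\norm{\lambda}_\Lambda$. Factoring $\abeta$ out of the norm bounds the first factor by $\abeta^{-1/2}\norm{y}_{Y_{\alpha,\abeta}}$, yielding $c_b=\abeta^{-1/2}$.

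The inf-sup condition is the main point. Given $\lambda=(\lambda_1,\lambda_2)\in\Lambda$, I would invoke \autoref{th:existenceanduniqueness} to select the unique $y\in Y$ solving $\wave[y]=\lambda_1$, $y(0)=0$, $y'(0)=\lambda_2$; prescribing $y(0)=0$ is the key normalization that makes the $\alpha$-term in the norm disappear. Then $b(y,\lambda)=\norm{\lambda_1}^2_{L^2(0,T;L^2(\Omega))}+\norm{\lambda_2}^2_{L^2(\Omega)}=\norm{\lambda}^2_\Lambda$, and for this $y$ the plain $Y$-norm satisfies $\norm{y}^2_Y=\norm{\lambda}^2_\Lambda$, so \autoref{eq:continuityobservation} gives $\norm{y}^2_{L^2(0,T;L^2(\Gamma))}\leq C_{\text{obs}}^2\norm{\lambda}^2_\Lambda$. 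Combining this with the vanishing $\alpha$-term and the identity $\abeta[\norm{\lambda_1}^2_{L^2(0,T;L^2(\Omega))}+\norm{\lambda_2}^2_{L^2(\Omega)}]=\abeta\norm{\lambda}^2_\Lambda$ bounds $\norm{y}^2_{Y_{\alpha,\abeta}}\leq(C_{\text{obs}}^2+\abeta)\norm{\lambda}^2_\Lambda$. Dividing $b(y,\lambda)=\norm{\lambda}^2_\Lambda$ by $\norm{y}_{Y_{\alpha,\abeta}}$ produces the lower bound $k_0\norm{\lambda}_\Lambda$ with $k_0=(C_{\text{obs}}^2+\abeta)^{-1/2}$. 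I expect the only delicate point to be recognizing that the zero-initial-value normalization is exactly what lets the observability constant $C_{\text{obs}}$ enter as claimed, rather than an uncontrolled $\alpha$-dependent quantity.
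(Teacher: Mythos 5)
Your proposal is correct and follows essentially the same route as the paper: Cauchy--Schwarz (in semi-inner-product form) for the first and third conditions, the identity $a_{\alpha,0}=a_{\alpha,\abeta}=\norm{\cdot}^2_{Y_{\alpha,\abeta}}$ on $\mathcal{N}(b)=Y_0$ for the second, and for the inf-sup condition the same choice of test function $\hat y$ with $\wave[\hat y]=\lambda_1$, $\hat y(0)=0$, $\hat y'(0)=\lambda_2$, combined with \autoref{eq:continuityobservation} to bound $\norm{\hat y}^2_{Y_{\alpha,\abeta}}\leq(C_{\text{obs}}^2+\abeta)\norm{\lambda}^2_\Lambda$. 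Your observation that the normalization $\hat y(0)=0$ is what eliminates the $\alpha$-dependence is exactly the point the paper's construction relies on.
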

\begin{proof}
\begin{itemize} 
 \item To prove the 1st Brezzi condition we estimate with Cauchy-Schwarz inequality as follows: Let $y$, $\vary \in Y$, then 
  \begin{align*}
   a_{\alpha,0}(y,\vary) 
   &=    \inner{\left(y,\sqrt{\alpha}y(0)\right)}{\left(\vary,\sqrt{\alpha}\vary(0)\right)}_{L^2(0,T;L^2(\Gamma))\times H^1_0(\Omega)}\\
   &\leq \norm{\left(y,\sqrt{\alpha}y(0)\right)}_{L^2(0,T;L^2(\Gamma))\times H^1_0(\Omega)}
         \norm{\left(\vary,\sqrt{\alpha}\vary(0)\right)}_{L^2(0,T;L^2(\Gamma))\times H^1_0(\Omega)}\\
   &\leq \norm{y}_{Y_{\alpha,\abeta}}\norm{\vary}_{Y_{\alpha,\abeta}}.
  \end{align*}
  The second inequality follows directly from applying the Cauchy-Schwarz inequality to the inner product $\inner{\cdot}{\cdot}_{Y_{\alpha,\abeta}}=a_{\alpha,\abeta}(\cdot,\cdot)$.
 \item In an analogous manner one shows for the 3rd Brezzi condition that for all $y\in Y$, $\lambda\in \Lambda$
 \begin{equation*}
  b(y,\lambda) = \frac{1}{\sqrt{\abeta}}\inner{\sqrt{\abeta}\left(\wave[y],y'(0)\right)}{\left(\lambda_1,\lambda_2\right)}_{L^2(0,T;L^2(\Omega))\times L^2(\Omega)} \leq \frac{1}{\sqrt{\abeta}}\norm{y}_{Y_{\alpha,\abeta}}\norm{\lambda}_\Lambda.
 \end{equation*}
 \item For proving the 2nd Brezzi condition we note that for all $y\in \mathcal{N}(b)=Y_0$
   \begin{equation*}
    a_{\alpha,0}(y,y)=\norm{y}^2_{L^2(0,T;L^2(\Gamma))}+\alpha\norm{y(0)}^2_{H^1_0(\Omega)}= a_{\alpha,\abeta}(y,y)= \norm{y}^2_{\alpha,\abeta},
   \end{equation*}
   which gives the assertion. Trivially, $a_{\alpha,\abeta}(y,y) = \norm{y}^2_{\alpha,\abeta}$ for all $y\in Y$ by the definition of $\norm{\cdot}_{Y_{\alpha,\abeta}}$ in \autoref{eq:Ynorm}.
 \item To prove the 4th Brezzi condition let 
   $0 \neq \lambda=(\lambda_1,\lambda_2) \in \Lambda$ be arbitrary and choose $\hat{y}\in Y$ 
   such that 
   \begin{equation*}
    \wave[\hat{y}]=\lambda_1,\qquad \hat{y}(0)=0,\qquad \hat{y}'(0)=\lambda_2.
   \end{equation*}
Then
\begin{equation*}
  \sup_{0\neq y\in Y}\frac{b(y,\lambda)}{\norm{y}_{Y_{\alpha,\abeta}}}\geq
 \frac{\norm{\lambda_1}^2_{L^2(0,T;L^2(\Omega))}+\norm{\lambda_2}^2_{L^2(\Omega)}}{\sqrt{\norm{\hat{y}}^2_{L^2(0,T;L^2(\Gamma))}+\abeta\left(\norm{\lambda_1}^2_{L^2(0,T;L^2(\Omega))}+\norm{\lambda_2}^2_{L^2(\Omega)}\right)}}\geq\frac{1}{\sqrt{C_{\text{obs}}^2+\abeta}}\norm{\lambda}_\Lambda,
 \end{equation*}
where we used  \autoref{eq:continuityobservation} in the last step.
\end{itemize}
\end{proof}
Existence and uniqueness of the saddle point problem \autoref{eq:saddlepointproblem1} follows from \cite[Theorem 4.2.3]{BofBreFor13} 
and \autoref{th:Brezziconstants}. Moreover, note that $a_{\alpha,0}(y,y)=a_{\alpha,\abeta}(y,y)$ for the solution $(y,\lambda)$ to \autoref{eq:saddlepointproblem1}, whence the Lagrangians $\mathcal{L}_{\abeta\geq0}$ share the same saddle point.
\begin{corollary}
 For every $\alpha>0$, $\abeta\geq0$ there exists a unique pair $(y_\alpha,\lambda)\in Y\times \Lambda$ satisfying the mixed variational problem \autoref{eq:saddlepointproblem1}. 
 As a consequence, the function $y_\alpha$ is the unique minimizer of the constrained optimization problem \autoref{eq:minimizingfunctional2}.
\end{corollary}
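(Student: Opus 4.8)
The plan is to treat the two cases $\abeta>0$ and $\abeta=0$ separately, disposing of the former by a direct appeal to the abstract saddle point theory and reducing the latter to it. For $\abeta>0$ all four Brezzi conditions have been verified in \autoref{th:Brezziconstants} with respect to $\norm{\cdot}_{Y_{\alpha,\abeta}}$, which for $\abeta>0$ is a genuine Hilbert-space norm on $Y$ by the norm lemma above. I would therefore invoke the standard well-posedness theorem for mixed variational problems, \cite[Theorem 4.2.3]{BofBreFor13}: boundedness of $a_{\alpha,\abeta}$ and of $b$, coercivity of $a_{\alpha,\abeta}$ on $\mathcal{N}(b)$, and the inf-sup condition for $b$ together yield a unique solution $(y_\alpha,\lambda)\in Y\times\Lambda$ of \autoref{eq:saddlepointproblem1}.

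To extend this to $\abeta=0$, the key observation is that the second equation of \autoref{eq:saddlepointproblem1}, namely $b(y,\varlambda)=0$ for all $\varlambda\in\Lambda$, forces $y_\alpha\in\mathcal{N}(b)=Y_0$ by the kernel identity \autoref{eq:null}; hence $\wave[y_\alpha]=0$ and $y_\alpha'(0)=0$. Consequently the augmentation contribution $\abeta[\inner{\wave[y_\alpha]}{\wave[\vary]}_{L^2(0,T;L^2(\Omega))}+\inner{y_\alpha'(0)}{\vary'(0)}_{L^2(\Omega)}]$ vanishes for \emph{every} test function $\vary\in Y$, so that $a_{\alpha,\abeta}(y_\alpha,\vary)=a_{\alpha,0}(y_\alpha,\vary)$ identically. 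This shows that the pair $(y_\alpha,\lambda)$ constructed above also solves \autoref{eq:saddlepointproblem1} with $\abeta=0$. Conversely, any solution of the $\abeta=0$ problem again lies in $Y_0$ by the same reasoning, whence it satisfies the $\abeta>0$ system and must therefore coincide with the unique solution just found; this delivers uniqueness of the full pair for $\abeta=0$ as well. In particular the saddle point is independent of $\abeta\geq0$, as already anticipated before the corollary.

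For the second assertion I would complete the square in the cost functional, obtaining $\mathcal{T}_\alpha(y)=\frac{1}{2}a_{\alpha,0}(y,y)-l(y)+\frac{1}{2}\norm{z_d}^2_{L^2(0,T;L^2(\Gamma))}$, where on the constraint set $Y_0$ the 2nd Brezzi condition gives $a_{\alpha,0}=a_{\alpha,\abeta}$. Because the saddle point problem has symmetric bilinear form $a_{\alpha,\abeta}$ coercive on the kernel, the classical equivalence between constrained minimization and the saddle point system (again \cite{BofBreFor13}) identifies the first component $y_\alpha$ as the minimizer of $\frac{1}{2}a_{\alpha,\abeta}(y,y)-l(y)$ over $Y_0$, hence of $\mathcal{T}_\alpha$ over $Y_0$ up to the additive constant. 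Uniqueness of the minimizer follows from strict convexity, which is exactly the coercivity $a_{\alpha,\abeta}(y,y)=\norm{y}^2_{Y_{\alpha,\abeta}}$ on $Y_0$.

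The main obstacle is the degenerate endpoint $\abeta=0$: there $\norm{\cdot}_{Y_{\alpha,0}}$ fails to control $\wave[y]$ and $y'(0)$ and is thus not a norm on all of $Y$, so the abstract theory cannot be applied verbatim. The device that circumvents this is precisely the kernel membership $y_\alpha\in Y_0$, on which the augmented and non-augmented forms coincide; everything else is routine bookkeeping of the Brezzi constants already computed in \autoref{th:Brezziconstants}.
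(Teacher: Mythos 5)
Your proof is correct, and for $\abeta>0$ it is the paper's proof: existence and uniqueness are obtained from \cite[Theorem 4.2.3]{BofBreFor13} together with the Brezzi conditions of \autoref{th:Brezziconstants}. Where you genuinely diverge is the endpoint $\abeta=0$. The paper handles it by direct application of the same abstract theorem: the first and second Brezzi conditions in \autoref{th:Brezziconstants} are stated for $a_{\alpha,0}$ as well, with boundedness and kernel-coercivity measured in the auxiliary norm $\norm{\cdot}_{Y_{\alpha,\abeta}}$ for an arbitrary $\abeta>0$, so the theory applies verbatim to the pair $(a_{\alpha,0},b)$ on the Hilbert space $\left(Y,\norm{\cdot}_{Y_{\alpha,\abeta}}\right)$; this is also how \autoref{co:isomorphism} later treats $\mathcal{A}_{\alpha,0}$. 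Your remark that ``the abstract theory cannot be applied verbatim'' at $\abeta=0$ is therefore slightly off target: what fails is only the use of $\norm{\cdot}_{Y_{\alpha,0}}$ as the ambient norm, not the applicability of the theory, since one is free to equip $Y$ with the $\abeta>0$ norm while keeping the bilinear form $a_{\alpha,0}$. Your alternative route --- forcing $y_\alpha\in\mathcal{N}(b)=Y_0$ via the second equation so that the augmentation terms vanish against every test function $\vary\in Y$, and then transporting existence and uniqueness between the $\abeta>0$ and $\abeta=0$ systems in both directions --- is a valid reduction, and it has the virtue of actually proving, rather than merely observing (as the paper does in the sentence preceding the corollary), that the Lagrangians $\mathcal{L}_\abeta$, $\abeta\geq0$, share one and the same saddle point. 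Your completion-of-the-square argument identifying $y_\alpha$ as the unique minimizer of $\mathcal{T}_\alpha$ over $Y_0$ is likewise sound, and supplies detail that the paper leaves implicit in its appeal to the standard equivalence between the saddle point system and the constrained minimization problem.
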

So far we have seen that for every $\alpha>0$ there exists a unique pair $(y,\lambda)\in Y\times \Lambda$ solving the saddle point problem 
\autoref{eq:saddlepointproblem1}. It is due to the regularization term in the minimization functional \autoref{eq:minimizingfunctional2} that the coercivity 
of $a_{\alpha,\abeta}$ $(\abeta\geq0)$ on the kernel of $b$ holds. In the work of C\^indea and M\"unch \cite{CinMue15, CinMue16} the authors consider the minimization of the data 
discrepancy only, without an additional regularization term, and thus, in order to establish coercivity, they needed to assume an additional
\emph{observability inequality} which guarantees that the measurements yield a norm on the kernel of their state equation. We can avoid such an assumption.

\subsection{The saddle point problem in operator notation}\label{subsec:operatornotation}
We recall that from \autoref{th:Brezziconstants} it follows that $a_{\alpha,0}$, $a_{\alpha,\abeta}$ and $b$ are bounded bilinear 
forms on $(Y,\norm{\cdot}_{Y_{\alpha,\abeta}})$, resp. $(\Lambda,\norm{\cdot}_\Lambda)$, for $\alpha,\abeta>0$. In the following we will denote the set $Y$ as the Hilbert space $(Y,\norm{\cdot}_{Y_{\alpha,\abeta}})$ and similarly $\Lambda$ for $(\Lambda,\norm{\cdot}_\Lambda)$.

Thus for every $\alpha > 0$ and $\rho \geq 0$ there exists a continuous operator $A_{\alpha,\abeta}:Y\rightarrow Y'$ such that 
\begin{equation*}
 \dual{A_{\alpha,\abeta} y}{\vary}_{Y'\times Y}=a_{\alpha,\abeta}(y,\vary) \text{ for all } y,\vary \in Y.
\end{equation*}
In an analogous manner we see that there exists a bounded operator $B:Y\rightarrow \Lambda'$ satisfying
\begin{equation*}
 \dual{By}{\lambda}_{\Lambda'\times \Lambda}=b(y,\lambda) \text{ for all }  y,\vary \in Y,\,\lambda\in\Lambda.
\end{equation*}
For $B:Y\rightarrow \Lambda'$, its adjoint $B':\Lambda\rightarrow Y'$ is given by
\begin{equation*}
 \dual{B' \lambda}{y}_{Y'\times Y}=\dual{By}{\lambda}_{\Lambda'\times \Lambda}=b(y,\lambda) \text{ for all }  y\in Y,\, \lambda\in\Lambda.
\end{equation*}
Using this operator notation, the saddle point problem, \autoref{eq:saddlepointproblem1}, is equivalent to 
\begin{equation}\label{eq:sppof1}  
\mathcal{A}_{\alpha,\abeta} \begin{pmatrix} 
                           y\\
                           \lambda\end{pmatrix}:=
 \begin{pmatrix}
  A_{\alpha,\abeta} & B'\\
  B & 0
 \end{pmatrix}
 \begin{pmatrix}
 y\\
 \lambda
 \end{pmatrix}
 =
 \begin{pmatrix}
 l\\
 0
 \end{pmatrix}.
\end{equation}
The following corollary is an immediate consequence of \autoref{th:Brezziconstants}.
\begin{corollary}\label{co:isomorphism}
Let $\alpha>0$. For $\abeta>0$ the linear operator $\mathcal{A}_{\alpha,\abeta}: Y \times\Lambda\rightarrow Y'\times \Lambda'$ 
defined in \autoref{eq:sppof1} is a self-adjoint isomorphism. Furthermore, for the Hilbert space $X:=Y\times \Lambda$ endowed with the inner product
\begin{equation}\label{eq:inner_productX}
 \inner{x}{\varx}_{X_{\alpha,\abeta}}:=\inner{y}{\vary}_{Y_{\alpha,\abeta}}+\inner{\lambda}{\varlambda}_\Lambda\quad 
 \text{ for all } x=(y,\lambda), \varx=(\vary,\varlambda)\in X,
\end{equation}
it follows that 
\begin{equation}\label{eq:operatornorms}
\norm{\mathcal{A}_{\alpha,\abeta}}_{\mathcal{L}(X,X')}\leq \overline{c}\quad\text{ and } \norm{\mathcal{A}_{\alpha,\abeta}^{-1}}_{\mathcal{L}(X',X)}\leq \frac{1}{\underline{c}} ,
\end{equation}
where the constants $\overline{c}=\overline{c}(\abeta)$ and $\underline{c}=\underline{c}(\abeta)$ are positive and independent of $\alpha$. Here, as usual, we identify the dual $X'$ of $X = Y\times\Lambda$ with $Y' \times \Lambda'$.

Similarly, the linear operator $\mathcal{A}_{\alpha,0}: Y \times\Lambda\rightarrow Y'\times \Lambda'$ 
defined in \autoref{eq:sppof1} is a self-adjoint isomorphism and \autoref{eq:operatornorms} holds 
for arbitrary $\abeta>0$.
\end{corollary}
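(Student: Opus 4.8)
The plan is to deduce every assertion from the four Brezzi conditions established in \autoref{th:Brezziconstants} together with the standard well-posedness and stability theory for saddle point problems, as in \cite[Theorem 4.2.3]{BofBreFor13} (the same result already invoked for existence and uniqueness). The crucial observation that drives the $\alpha$-robustness is that each of the four Brezzi constants is independent of $\alpha$: the continuity constant of $a_{\alpha,\abeta}$ equals $1$ by \autoref{eq:ca}, the coercivity constant on $\mathcal{N}(b)$ equals $1$ by \autoref{eq:kernel}, the continuity constant of $b$ is $c_b = 1/\sqrt{\abeta}$, and the inf-sup constant is $k_0 = 1/\sqrt{C_{\text{obs}}^2+\abeta}$. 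None of these depends on $\alpha$, and they depend on $\abeta$ only through $c_b$ and $k_0$.

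First I would settle self-adjointness. The bilinear form associated with $\mathcal{A}_{\alpha,\abeta}$ on $X = Y\times\Lambda$ is $\mathcal{B}((y,\lambda),(\vary,\varlambda)) = a_{\alpha,\abeta}(y,\vary) + b(\vary,\lambda) + b(y,\varlambda)$. Since $a_{\alpha,\abeta}$ is symmetric (it is the inner product $\inner{\cdot}{\cdot}_{Y_{\alpha,\abeta}}$) and the two off-diagonal $b$-terms merely interchange under swapping the arguments, $\mathcal{B}$ is symmetric; equivalently, $B$ and $B'$ are mutually adjoint and $A_{\alpha,\abeta}$ is self-adjoint, so the block operator $\mathcal{A}_{\alpha,\abeta}$ is self-adjoint.

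Next, for $\abeta > 0$ the four Brezzi conditions guarantee that $\mathcal{A}_{\alpha,\abeta}\colon X \to X'$ is a bijection with bounded inverse, which is precisely the content of \cite[Theorem 4.2.3]{BofBreFor13}. The upper bound $\overline{c}$ in \autoref{eq:operatornorms} I would obtain from the direct estimate $\mathcal{B}((y,\lambda),(\vary,\varlambda)) \leq \norm{y}_{Y_{\alpha,\abeta}}\norm{\vary}_{Y_{\alpha,\abeta}} + c_b\left(\norm{\vary}_{Y_{\alpha,\abeta}}\norm{\lambda}_\Lambda + \norm{y}_{Y_{\alpha,\abeta}}\norm{\varlambda}_\Lambda\right)$, which bounds $\norm{\mathcal{A}_{\alpha,\abeta}}_{\mathcal{L}(X,X')}$ by a constant $\overline{c}=\overline{c}(c_b)$ depending on $\abeta$ but not on $\alpha$. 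The lower bound $\norm{\mathcal{A}_{\alpha,\abeta}^{-1}}_{\mathcal{L}(X',X)} \leq 1/\underline{c}$ follows from the Brezzi stability estimate, which expresses the stability constant as a fixed function of the continuity constant of $a_{\alpha,\abeta}$, the coercivity constant on $\mathcal{N}(b)$, $c_b$ and $k_0$; since all of these are $\alpha$-independent, so is $\underline{c}=\underline{c}(\abeta)$.

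Finally, for the case $\abeta = 0$ I would fix an arbitrary $\abeta > 0$, retain the norm $\norm{\cdot}_{Y_{\alpha,\abeta}}$ on $Y$, and re-examine the four Brezzi conditions for $a_{\alpha,0}$ measured in this norm. Boundedness of $a_{\alpha,0}$ in the augmented norm is exactly the first inequality in \autoref{eq:ca}; coercivity on the kernel is the identity $a_{\alpha,0}(y,y) = \norm{y}^2_{Y_{\alpha,\abeta}}$ for $y \in \mathcal{N}(b) = Y_0$ recorded in \autoref{eq:kernel}; and the two conditions on $b$ are literally unchanged, as they do not involve $a$. The same invocation of \cite[Theorem 4.2.3]{BofBreFor13} then yields that $\mathcal{A}_{\alpha,0}$ is a self-adjoint isomorphism with the bounds \autoref{eq:operatornorms}, for any fixed $\abeta > 0$. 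I expect the main obstacle to be conceptual rather than computational: one must measure $a_{\alpha,0}$ in the augmented norm $\norm{\cdot}_{Y_{\alpha,\abeta}}$ and recognize, via \autoref{eq:kernel}, that coercivity on the kernel survives precisely because the augmentation term is inert on $Y_0$, so that the stability constants for $\abeta=0$ coincide with those for $\abeta>0$.
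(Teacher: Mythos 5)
Your proposal is correct and follows exactly the route the paper intends: the paper states this corollary without a written proof, calling it ``an immediate consequence of \autoref{th:Brezziconstants}'' via the standard saddle point theory of \cite[Theorem 4.2.3]{BofBreFor13}, which is precisely what you have spelled out, including the key observations that all four Brezzi constants are $\alpha$-independent and that the $\abeta=0$ case works because \autoref{eq:ca} and \autoref{eq:kernel} already give boundedness and kernel-coercivity of $a_{\alpha,0}$ in the augmented norm $\norm{\cdot}_{Y_{\alpha,\abeta}}$. Nothing is missing; your writeup is simply a more explicit version of the argument the paper leaves implicit.
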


\section{Robust preconditioning}\label{sec:preconditioning}
Since we are dealing with a space-time domain $\Omega_T$, a discretization of \autoref{eq:sppof1} will lead to a 
very large linear system of equations. For the efficient solution we will use a preconditioned minimum residual (MINRES) 
method. The regularization parameter $\alpha>0$ enters in \autoref{eq:sppof1} via the bilinear form 
$a_{\alpha,\abeta}$ ($\abeta \geq 0$). Thus the solution of the (discretized) system depends on $\alpha$. 
It is our goal to obtain $\alpha$-independent convergence of an appropriately designed preconditioned MINRES method
for the discretized system. Such a preconditioner for the discretized system is derived from an \emph{operator preconditioner} 
for the continuous system \autoref{eq:sppof1}.

\subsection{Operator preconditioning}
Since the operator $\mathcal{A}_{\alpha,\abeta}$, defined in \autoref{eq:sppof1} is not a self-mapping, 
a MINRES method \emph{cannot} be applied to the system \autoref{eq:sppof1}. 
This is remedied by complementing it with an isomorphic operator such that the composition is a self-mapping. 
This complementation is refered to as \emph{preconditioning}. For an in-depth discussion on this topic we refer 
to \cite{Hip06, MarWin10}.
\begin{theorem}\label{th:contpcsystem}
 Let $\alpha>0$. For $\abeta>0$ the Hilbert space $X= Y\times\Lambda$ equipped with the inner product $\inner{\cdot}{\cdot}_{X_{\alpha,\abeta}}$ from \autoref{eq:inner_productX}, 
 let $\mathcal{P}_{\alpha,\abeta}:X\rightarrow X'$ be defined by 
 \begin{equation} \label{eq:xar}
  \dual{\mathcal{P}_{\alpha,\abeta} x}{\varx}_{X'\times X} := \inner{x}{\varx}_{X_{\alpha,\abeta}} 
  \quad \text{ for all } x,\varx\in X.
 \end{equation}
 Then the operator $\mathcal{P}_{\alpha,\abeta}^{-1}\mathcal{A}_{\alpha,\abeta}:X\rightarrow X$ is a self-adjoint isomorphism with respect to the topology
 induced by the inner product $\inner{\cdot}{\cdot}_{X_{\alpha,\abeta}}$.
 
 Furthermore, the condition number of the preconditioned systems satisfies
 \begin{equation}\label{eq:precon}
  \kappa(\mathcal{P}_{\alpha,\abeta}^{-1}\mathcal{A}_{\alpha,\abeta}) = 
  \norm{\mathcal{A}_{\alpha,\abeta}}_{\mathcal{L}(X,X')} \norm{\mathcal{A}_{\alpha,\abeta}^{-1}}_{\mathcal{L}(X',X)}
 \end{equation}
and is bounded uniformly in $\alpha>0$.

Similarly, for the linear operator $\mathcal{A}_{\alpha,0}: Y \times\Lambda\rightarrow Y'\times \Lambda'$ 
defined in \autoref{eq:sppof1} the operator $\mathcal{P}_{\alpha,\abeta}^{-1}\mathcal{A}_{\alpha,0}:X\rightarrow X$ is a self-adjoint isomorphism with respect to the topology
induced by the inner product $\inner{\cdot}{\cdot}_{X_{\alpha,\abeta}}$ and \autoref{eq:precon} holds for arbitrary $\abeta>0$.
\end{theorem}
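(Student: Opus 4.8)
The plan is to treat $\mathcal{P}_{\alpha,\abeta}$ as the Riesz isomorphism of the Hilbert space $(X,\inner{\cdot}{\cdot}_{X_{\alpha,\abeta}})$ and to reduce every assertion to \autoref{co:isomorphism} by purely formal manipulations. First I would observe that, by its definition \autoref{eq:xar}, $\mathcal{P}_{\alpha,\abeta}$ is precisely the map supplied by the Riesz representation theorem; hence it is a bijective isometry from $X$ onto $X'$, so that $\norm{\mathcal{P}_{\alpha,\abeta}x}_{X'}=\norm{x}_{X_{\alpha,\abeta}}$ for all $x\in X$. Since \autoref{co:isomorphism} already guarantees that $\mathcal{A}_{\alpha,\abeta}\colon X\to X'$ is an isomorphism, the composition $\mathcal{P}_{\alpha,\abeta}^{-1}\mathcal{A}_{\alpha,\abeta}\colon X\to X$ is an isomorphism as a composition of isomorphisms.

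For self-adjointness with respect to $\inner{\cdot}{\cdot}_{X_{\alpha,\abeta}}$ I would use \autoref{eq:xar} to write, for $x,\varx\in X$,
\[
 \inner{\mathcal{P}_{\alpha,\abeta}^{-1}\mathcal{A}_{\alpha,\abeta}x}{\varx}_{X_{\alpha,\abeta}}
 = \dual{\mathcal{A}_{\alpha,\abeta}x}{\varx}_{X'\times X},
\]
and then invoke the self-adjointness of $\mathcal{A}_{\alpha,\abeta}$ from \autoref{co:isomorphism} (equivalently, the symmetry of $a_{\alpha,\abeta}$ together with the transposed pairing of $B$ and $B'$) to exchange the roles of $x$ and $\varx$; symmetry of the real inner product then yields $\inner{\mathcal{P}_{\alpha,\abeta}^{-1}\mathcal{A}_{\alpha,\abeta}x}{\varx}_{X_{\alpha,\abeta}}=\inner{x}{\mathcal{P}_{\alpha,\abeta}^{-1}\mathcal{A}_{\alpha,\abeta}\varx}_{X_{\alpha,\abeta}}$.

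For the condition number I would exploit the isometry once more. Since $\mathcal{P}_{\alpha,\abeta}\bigl(\mathcal{P}_{\alpha,\abeta}^{-1}\mathcal{A}_{\alpha,\abeta}x\bigr)=\mathcal{A}_{\alpha,\abeta}x$, the isometry gives $\norm{\mathcal{P}_{\alpha,\abeta}^{-1}\mathcal{A}_{\alpha,\abeta}x}_{X_{\alpha,\abeta}}=\norm{\mathcal{A}_{\alpha,\abeta}x}_{X'}$ for every $x$, and taking the supremum over $\norm{x}_{X_{\alpha,\abeta}}=1$ yields $\norm{\mathcal{P}_{\alpha,\abeta}^{-1}\mathcal{A}_{\alpha,\abeta}}_{\mathcal{L}(X,X)}=\norm{\mathcal{A}_{\alpha,\abeta}}_{\mathcal{L}(X,X')}$. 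Writing the norm of the inverse as the reciprocal of $\inf_{x\neq0}\norm{\mathcal{A}_{\alpha,\abeta}x}_{X'}/\norm{x}_{X_{\alpha,\abeta}}$ gives, by the same substitution, $\norm{(\mathcal{P}_{\alpha,\abeta}^{-1}\mathcal{A}_{\alpha,\abeta})^{-1}}_{\mathcal{L}(X,X)}=\norm{\mathcal{A}_{\alpha,\abeta}^{-1}}_{\mathcal{L}(X',X)}$. Multiplying these two identities establishes \autoref{eq:precon}, and the $\alpha$-independent bounds \autoref{eq:operatornorms} of \autoref{co:isomorphism} then bound $\kappa$ by $\overline{c}/\underline{c}$ uniformly in $\alpha$. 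The statement for $\mathcal{A}_{\alpha,0}$ follows verbatim, now appealing to the second part of \autoref{co:isomorphism}.

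I expect that essentially all of the analytic content has already been discharged in \autoref{co:isomorphism} through the Brezzi constants of \autoref{th:Brezziconstants}; the only delicate point here is the correct bookkeeping of the Riesz isometry, ensuring that the $\mathcal{L}(X,X)$-norms of the preconditioned operator and its inverse coincide exactly with the $\mathcal{L}(X,X')$- and $\mathcal{L}(X',X)$-norms of $\mathcal{A}_{\alpha,\abeta}$, so that no $\alpha$-dependence is reintroduced in passing to the self-map.
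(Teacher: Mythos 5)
Your proposal is correct and follows essentially the same route as the paper's proof: identify $\mathcal{P}_{\alpha,\abeta}$ with the (inverse) Riesz representation operator for $(X,\inner{\cdot}{\cdot}_{X_{\alpha,\abeta}})$, deduce the isomorphism and self-adjointness of $\mathcal{P}_{\alpha,\abeta}^{-1}\mathcal{A}_{\alpha,\abeta}$ from \autoref{co:isomorphism} via the identity $\inner{\mathcal{P}_{\alpha,\abeta}^{-1}\mathcal{A}_{\alpha,\abeta}x}{\varx}_{X_{\alpha,\abeta}}=\dual{\mathcal{A}_{\alpha,\abeta}x}{\varx}_{X'\times X}$, and obtain \autoref{eq:precon} from the norm identities $\norm{\mathcal{P}_{\alpha,\abeta}^{-1}\mathcal{A}_{\alpha,\abeta}}_{\mathcal{L}(X,X)}=\norm{\mathcal{A}_{\alpha,\abeta}}_{\mathcal{L}(X,X')}$ and $\norm{(\mathcal{P}_{\alpha,\abeta}^{-1}\mathcal{A}_{\alpha,\abeta})^{-1}}_{\mathcal{L}(X,X)}=\norm{\mathcal{A}_{\alpha,\abeta}^{-1}}_{\mathcal{L}(X',X)}$ together with \autoref{eq:operatornorms}. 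The only difference is cosmetic: you spell out the isometry argument behind the norm identities, which the paper simply asserts.
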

\begin{proof}
 The operator $\mathcal{P}_{\alpha,\abeta}: X\rightarrow X'$ is the inverse of the Riesz representation operator $J_{X'}:X'\rightarrow X$ 
 (see for instance \cite{Rud74}) when $X$ is associated with the topology induced by $\norm{\cdot}_{X_{\alpha,\abeta}}$ as introduced in 
 \autoref{eq:xar}. 
 
 Then from \autoref{co:isomorphism} it follows that the composition $\mathcal{P}_{\alpha,\abeta}^{-1}\mathcal{A}_{\alpha,\abeta}:X\rightarrow X$ is a linear isomorphism on 
 $X$ which is self-adjoint with respect to the inner product on $X$,
 \begin{equation*}
  \inner{\mathcal{P}_{\alpha,\abeta}^{-1}\mathcal{A}_{\alpha,\abeta} x }{\varx}_{X_{\alpha,\abeta}} = \dual{\mathcal{A}_{\alpha,\abeta}  x }{\varx}_{X'\times X} =  \dual{\mathcal{A}_{\alpha,\abeta} \varx}{x}_{X'\times X} = \inner{\mathcal{P}_{\alpha,\abeta}^{-1}\mathcal{A}_{\alpha,\abeta} \varx}{x}_{X_{\alpha,\abeta}},
 \end{equation*}
which shows the first claim.

For the second part note that
\begin{equation*}
 \norm{\mathcal{P}_{\alpha,\abeta}^{-1}\mathcal{A}_{\alpha,\abeta}}_{\mathcal{L}(X,X)} = \norm{\mathcal{A}_{\alpha,\abeta}}_{\mathcal{L}(X,X')} \quad \text{and} \quad \norm{\left(\mathcal{P}_{\alpha,\abeta}^{-1}\mathcal{A}_{\alpha,\abeta}\right)^{-1}}_{\mathcal{L}(X,X)} = \norm{\mathcal{A}^{-1}_{\alpha,\abeta}}_{\mathcal{L}(X',X)}.
\end{equation*}
The assertion then follows from the definition of the condition number and \autoref{eq:operatornorms}.

The same considerations apply for the operator $\mathcal{A}_{\alpha,0}$.
\end{proof}
In the following we use $\mathcal{P}_{\alpha,\abeta}$ as a preconditioner and as a consequence the minimum residual method can be applied to 
the preconditioned system of \autoref{eq:sppof1}:
 \begin{equation} \label{eq:sppof2}
  \mathcal{P}_{\alpha,\abeta}^{-1}\mathcal{A}_{\alpha,\abeta} 
  \begin{pmatrix}y\\\lambda\end{pmatrix} =\mathcal{P}_{\alpha,\abeta}^{-1}\begin{pmatrix}l\\0\end{pmatrix}.
 \end{equation}

\subsection{Stable discretization}\label{subsec:disc}
The well-posedness of the discretized version of \autoref{eq:sppof1} is characterized by the \emph{discrete} Brezzi conditions. For a pair of conforming discretization spaces $Y_h\subset Y$ and $\Lambda_h\subset \Lambda$, the bilinear forms $a_{\alpha,0}$, $a_{\alpha,\abeta}$ and $b$, restricted to the subspaces $Y_h\times Y_h$, $Y_h\times\Lambda_h$ respectively, satisfy the 1st and 3rd Brezzi condition from \autoref{th:Brezziconstants} automatically for every $\alpha,\abeta>0$. The 2nd Brezzi condition is in general only satisfied for $a_{\alpha,\abeta}$ for $\abeta>0$. We will therefore only consider the discretization of the optimality system for the \emph{augmented} Lagrangian $\mathcal{L}_\abeta$ ($\abeta>0$), see \autoref{eq:Lagrangian1}.

It remains to verify the 4th Brezzi condition. 
\begin{lemma}\label{le:discinfsup}
Let $Y_h\subset Y$ be a conforming discretization space and set
\begin{equation}
  \label{eq:LamDisc}
\Lambda_h :=\left\lbrace \lambda_h = (\wave[y_h], y_h'(0)) : y_h\in Y_{h},\, y_h(0) = 0 \right\rbrace.
\end{equation}
Then for every $\alpha,\abeta>0$, $\Lambda_h\subset\Lambda$ and the bilinear form $b$ satisfy the 4th Brezzi condition,
\begin{equation*}
 \sup_{0 \neq y_h \in Y_h} \frac{b(y_h,\lambda_h)}{\norm{y_h}_{Y_{\alpha,\abeta}}} \geq k_0\norm{\lambda_h}_{\Lambda_h} \text{ for all } 
 \lambda_h\in\Lambda\text{ with } k_0=\frac{1}{\sqrt{C_{\text{obs}}^2+\abeta}},
\end{equation*}
where $C_{\text{obs}}=C_{\text{obs}}(\Omega,\Omega_s,T)$ is the constant from \autoref{eq:continuityobservation}.
\end{lemma}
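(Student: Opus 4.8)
The plan is to transfer the argument for the fourth Brezzi condition in \autoref{th:Brezziconstants} essentially verbatim to the discrete setting, exploiting that $\Lambda_h$ has been constructed precisely as the image of the zero-initial-data subspace $\set{y_h \in Y_h : y_h(0) = 0}$ under the map $y_h \mapsto (\wave[y_h], y_h'(0))$. In the continuous proof the supremum was realized by a function $\hat y$ solving $\wave[\hat y] = \lambda_1$, $\hat y(0) = 0$, $\hat y'(0) = \lambda_2$, whose existence rested on \autoref{th:existenceanduniqueness}. The key observation is that, by the very definition of $\Lambda_h$ in \autoref{eq:LamDisc}, every $\lambda_h \in \Lambda_h$ already admits such a preimage \emph{inside} $Y_h$; no discrete lifting or Fortin operator has to be constructed. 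This is the precise sense in which the discrete inf-sup condition is inherited from the continuous one.

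Concretely, I would first fix $0 \neq \lambda_h \in \Lambda_h$ and, using \autoref{eq:LamDisc}, select $\hat y_h \in Y_h$ with $\hat y_h(0) = 0$ and $\lambda_h = (\wave[\hat y_h], \hat y_h'(0))$. Taking $\hat y_h$ as a test function bounds the discrete supremum from below by $b(\hat y_h, \lambda_h)/\norm{\hat y_h}_{Y_{\alpha,\abeta}}$. The numerator evaluates to $b(\hat y_h, \lambda_h) = \norm{\wave[\hat y_h]}^2_{L^2(0,T;L^2(\Omega))} + \norm{\hat y_h'(0)}^2_{L^2(\Omega)} = \norm{\lambda_h}^2_\Lambda$, exactly as in the continuous case.

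For the denominator I would use that $\hat y_h(0) = 0$ makes the $\alpha$-term in \autoref{eq:Ynorm} vanish, so that $\norm{\hat y_h}^2_{Y_{\alpha,\abeta}} = \norm{\hat y_h}^2_{L^2(0,T;L^2(\Gamma))} + \abeta\,\norm{\lambda_h}^2_\Lambda$. The trace term is controlled by the continuity estimate \autoref{eq:continuityobservation}, which applies to $\hat y_h$ because the discretization is conforming, $Y_h \subset Y$; combined once more with $\hat y_h(0) = 0$ one has $\norm{\hat y_h}^2_Y = \norm{\wave[\hat y_h]}^2_{L^2(0,T;L^2(\Omega))} + \norm{\hat y_h'(0)}^2_{L^2(\Omega)} = \norm{\lambda_h}^2_\Lambda$, whence $\norm{\hat y_h}^2_{L^2(0,T;L^2(\Gamma))} \leq C_{\text{obs}}^2\,\norm{\lambda_h}^2_\Lambda$ and therefore $\norm{\hat y_h}^2_{Y_{\alpha,\abeta}} \leq (C_{\text{obs}}^2 + \abeta)\,\norm{\lambda_h}^2_\Lambda$. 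Dividing the numerator by this bound yields $b(\hat y_h, \lambda_h)/\norm{\hat y_h}_{Y_{\alpha,\abeta}} \geq k_0\,\norm{\lambda_h}_\Lambda$ with $k_0 = (C_{\text{obs}}^2 + \abeta)^{-1/2}$, and since $\Lambda_h$ carries the restriction of the norm of $\Lambda$ we have $\norm{\lambda_h}_{\Lambda_h} = \norm{\lambda_h}_\Lambda$, which gives the claim.

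I expect essentially no obstacle beyond bookkeeping: the entire difficulty of a discrete inf-sup proof, namely producing a discrete test function that saturates the bound, has been absorbed into the definition of $\Lambda_h$. The only point requiring a word of justification is that the continuity/observability constant $C_{\text{obs}}$ from \autoref{eq:continuityobservation} is available at the discrete level, which is immediate from conformity $Y_h \subset Y$. In particular the discrete inf-sup constant $k_0$ is \emph{identical} to its continuous counterpart and, crucially, independent of the mesh parameter $h$.
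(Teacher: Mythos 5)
Your proof is correct and takes essentially the same route as the paper: the paper likewise picks the preimage $\hat{y}_h \in Y_h$ with $\hat{y}_h(0)=0$, $\wave[\hat{y}_h]=\lambda_{h,1}$, $\hat{y}_h'(0)=\lambda_{h,2}$, which exists by construction of $\Lambda_h$, and then repeats the continuous inf-sup argument verbatim, with $C_{\text{obs}}$ available at the discrete level by conformity $Y_h\subset Y$. The only point the paper states explicitly that you leave implicit is the (trivial) inclusion $\Lambda_h\subset\Lambda$, which also follows from $Y_h\subset Y$.
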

\begin{proof}
Since $Y_h\subset Y$ it follows that $(\wave[y_h], y_h'(0))\in L^2(0,T;L^2(\Omega))\times L^2(\Omega)=\Lambda$ for every $y_h\in Y_h$, whence $\Lambda_h\subset \Lambda$.

The proof of the second assertion follows exactly the same lines as the last part of the proof of \autoref{th:Brezziconstants} with $(Y,\Lambda,y,\lambda,\hat{y})$ replaced by $(Y_h,\Lambda_h,y_h,\lambda_h,\hat{y}_h)$. Here $\hat{y}_h\in Y_h$ is chosen such that
 \begin{equation*}
\wave[\hat{y}_h]=\lambda_{h,1},\qquad \hat{y}_h(0)=0,\qquad \hat{y}_h'(0)=\lambda_{h,2},
\end{equation*}
which exists by construction of $\Lambda_h$.
\end{proof}

With $A_{\alpha,\abeta,h}:Y_h\rightarrow Y_h'$ and $B_h:Y_h\rightarrow\Lambda_h'$ defined by 
\begin{equation}\label{eq:discoperators}
 \dual{A_{\alpha,\abeta,h}y_h}{\vary _h}_{Y_h'\times Y_h}=a_{\alpha,\abeta}(y_h,\vary _h),\qquad
 \dual{B_h y_h}{\lambda_h}_{\Lambda_h'\times \Lambda_h}=b(y_h,\lambda_h),
\end{equation}
and $l_h=l|_{Y_h}$, the discretized saddle point problem which is considered in the following is given by
\begin{equation}\label{eq:discsaddlepointproblemoperatorform}
  \mathcal{A}_{\alpha,\abeta,h}\begin{pmatrix}y_h\\\lambda_h\end{pmatrix} := \begin{pmatrix} A_{\alpha,\abeta,h} & 
  B_h'\\B_h&0\end{pmatrix}\begin{pmatrix}y_h\\\lambda_h\end{pmatrix}=\begin{pmatrix}l_h\\0\end{pmatrix}.
\end{equation}
Since the discrete Brezzi conditions are satisfied with the same constants as for the continuous formulation, \autoref{co:isomorphism} carries over to the 
discretized formulation in  \autoref{eq:discsaddlepointproblemoperatorform}.
\begin{corollary}\label{co:discreteisomorphism}
Let $\alpha,\abeta>0$. For a conforming discretization space $Y_h\subset Y$ and $\Lambda_h\subset\Lambda$ given by \autoref{eq:LamDisc}, the linear operator 
$\mathcal{A}_{\alpha,\abeta,h}:Y_h\times\Lambda_h\rightarrow Y_h'\times\Lambda_h'$ defined in \autoref{eq:discsaddlepointproblemoperatorform} is a symmetric 
isomorphism. Furthermore, for the discretization space $X_h=Y_h\times\Lambda_h$ endowed with the inner product $\inner{\cdot}{\cdot}_{X_{\alpha,\abeta}}$ from 
\autoref{eq:inner_productX}
it follows that
\begin{equation}\label{eq:operatornorms_discrete}
 \norm{\mathcal{A}_{\alpha,\abeta,h}}_{\mathcal{L}(X_h,X_h')}\leq \overline{c}\text{ and } \norm{\mathcal{A}_{\alpha,\abeta,h}^{-1}}_{\mathcal{L}(X_h',X_h)}\leq 
 \frac{1}{\underline{c}} ,
\end{equation}
where the constants $\overline{c}=\overline{c}(\abeta)$ and $\underline{c}=\underline{c}(\abeta)$, are positive and independent of $\alpha$ 
and do not depend on the choice of $Y_h$.
\end{corollary}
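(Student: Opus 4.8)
The plan is to observe that the discrete problem satisfies all four Brezzi conditions with exactly the same constants that were obtained for the continuous problem in \autoref{th:Brezziconstants}, and then to repeat verbatim the argument of \autoref{co:isomorphism}. Since the abstract well-posedness result \cite[Theorem 4.2.3]{BofBreFor13} produces bounds on $\norm{\mathcal{A}_{\alpha,\abeta}}$ and $\norm{\mathcal{A}_{\alpha,\abeta}^{-1}}$ that depend only on the four Brezzi constants, identical constants force identical bounds, and the $\alpha$- and $h$-independence then follows automatically.

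First I would verify the four discrete conditions one by one. The 1st and 3rd conditions (boundedness of $a_{\alpha,\abeta}$ and of $b$) are inherited for free: on the conforming subspaces $Y_h\subset Y$ and $\Lambda_h\subset\Lambda$ the forms are mere restrictions of the continuous ones, so the estimates \autoref{eq:ca} and \autoref{eq:cb} survive with the same constants $1$ and $c_b=1/\sqrt{\abeta}$. For the 2nd condition the augmentation does the work: by \autoref{th:Brezziconstants} one has the pointwise identity $a_{\alpha,\abeta}(y,y)=\norm{y}^2_{Y_{\alpha,\abeta}}$ for every $y\in Y$, hence in particular for every $y_h\in Y_h$. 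Thus $a_{\alpha,\abeta}$ is coercive with constant $1$ on all of $Y_h$, and \emph{a fortiori} on the discrete kernel $\mathcal{N}(B_h)$; this is precisely the point at which the augmented formulation ($\abeta>0$) is essential, since it removes any need to compare the discrete kernel $\mathcal{N}(B_h)$ with the continuous one. The 4th condition, the discrete inf-sup inequality, is the only delicate ingredient, and it has already been secured in \autoref{le:discinfsup} through the tailored choice of $\Lambda_h$ in \autoref{eq:LamDisc}: for any $\lambda_h=(\lambda_{h,1},\lambda_{h,2})\in\Lambda_h$ the defining construction provides $\hat{y}_h\in Y_h$ with $\wave[\hat{y}_h]=\lambda_{h,1}$, $\hat{y}_h(0)=0$, $\hat{y}_h'(0)=\lambda_{h,2}$, yielding the same constant $k_0=1/\sqrt{C_{\text{obs}}^2+\abeta}$ as in the continuous case.

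With all four discrete constants coinciding with the continuous ones, I would conclude exactly as in \autoref{co:isomorphism}: \cite[Theorem 4.2.3]{BofBreFor13} makes $\mathcal{A}_{\alpha,\abeta,h}$ an isomorphism, and its symmetry follows from the symmetry of the bilinear form $a_{\alpha,\abeta}$ together with the block structure $\begin{pmatrix} A_{\alpha,\abeta,h} & B_h' \\ B_h & 0\end{pmatrix}$. The resulting bounds $\overline{c}(\abeta)$ and $1/\underline{c}(\abeta)$ in \autoref{eq:operatornorms_discrete} are the same explicit functions of the four Brezzi constants as in \autoref{eq:operatornorms}; since those constants do not involve $\alpha$ and, crucially, the inf-sup constant furnished by \autoref{le:discinfsup} is independent of the particular conforming space $Y_h$, the bounds are uniform both in the regularization parameter and in the discretization. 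The only genuine obstacle in this chain of reasoning is the discrete inf-sup estimate, but that obstacle is disposed of in advance by defining $\Lambda_h$ as the image $\set{(\wave[y_h],y_h'(0)) : y_h\in Y_h,\ y_h(0)=0}$, which guarantees the existence of the required pre-image $\hat{y}_h$; once this is in place the corollary is pure bookkeeping.
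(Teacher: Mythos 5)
Your proposal is correct and takes essentially the same route as the paper, which justifies this corollary by exactly the observation you make: the discrete Brezzi conditions hold with the same constants as the continuous ones (the 1st and 3rd by conformity of $Y_h\subset Y$, $\Lambda_h\subset\Lambda$, the 2nd on all of $Y_h$ via the augmentation identity $a_{\alpha,\abeta}(y_h,y_h)=\norm{y_h}^2_{Y_{\alpha,\abeta}}$ with $\abeta>0$, and the 4th by \autoref{le:discinfsup}), so that \autoref{co:isomorphism} carries over verbatim with bounds depending only on those constants. Your emphasis that the augmentation removes any need to compare the discrete kernel $\mathcal{N}(B_h)$ with the continuous kernel is precisely the point the paper makes in \autoref{subsec:disc}.
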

\begin{theorem}\label{th:discpcsystem}
 Let $\alpha,\abeta>0$ and assume that $Y_h\subset Y$ is a conforming discretization space and let $\Lambda_h\subset\Lambda$ be given by \autoref{eq:LamDisc}. Let 
 $\mathcal{P}_{\alpha,\abeta,h}:Y_h\times\Lambda_h=X_h\rightarrow X_h'$ be the matrix representation associated to the inner product $\inner{\cdot}
 {\cdot}_{X_{\alpha,\abeta}}$,
 \begin{equation*}
  \dual{\mathcal{P}_{\alpha,\abeta,h}x_h}{\varx _h}_{X_h'\times X_h}=\inner{x_h}{\varx_h}_{X_{\alpha,\abeta}}
  ,\quad x_h\,,\varx_h\in X_h.
 \end{equation*}
 Then the operator $\mathcal{P}_{\alpha,\abeta,h}^{-1}\mathcal{A}_{\alpha,\abeta,h}:X_h\rightarrow X_h$ is a symmetric isomorphism with respect to the topology 
 induced by the inner product $\inner{\cdot}{\cdot}_{X_{\alpha,\abeta}}$.
 
  Furthermore, the condition number of the preconditioned systems satisfies
 \begin{equation}\label{eq:precon2}
  \kappa(\mathcal{P}_{\alpha,\abeta,h}^{-1}\mathcal{A}_{\alpha,\abeta,h}) = 
  \norm{\mathcal{A}_{\alpha,\abeta,h}}_{\mathcal{L}(X_h,X_h')} \norm{\mathcal{A}_{\alpha,\abeta,h}^{-1}}_{\mathcal{L}(X_h,X_h')}
 \end{equation}
and is bounded uniformly in $\alpha>0$. Moreover, it does not depend on the choice of $Y_h$.
 
\end{theorem}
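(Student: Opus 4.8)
The plan is to follow verbatim the argument used for \autoref{th:contpcsystem}, replacing each continuous object by its discrete counterpart and invoking \autoref{co:discreteisomorphism} in place of \autoref{co:isomorphism}. First I would observe that, since $X_h = Y_h \times \Lambda_h$ is finite dimensional, the operator $\mathcal{P}_{\alpha,\abeta,h}$ is simply the Gram (mass) matrix of the inner product $\inner{\cdot}{\cdot}_{X_{\alpha,\abeta}}$ restricted to $X_h$; it is symmetric and positive definite, hence invertible, and coincides with the inverse of the Riesz representation operator $J_{X_h'}:X_h' \to X_h$ for the topology induced by $\norm{\cdot}_{X_{\alpha,\abeta}}$. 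Consequently $\mathcal{P}_{\alpha,\abeta,h}^{-1}\mathcal{A}_{\alpha,\abeta,h}$ is a well-defined self-mapping of $X_h$.

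Next I would establish symmetry with respect to $\inner{\cdot}{\cdot}_{X_{\alpha,\abeta}}$. Because the bilinear forms $a_{\alpha,\abeta}$ and $b$ are symmetric, the block operator $\mathcal{A}_{\alpha,\abeta,h}$ from \autoref{eq:discsaddlepointproblemoperatorform} satisfies $\dual{\mathcal{A}_{\alpha,\abeta,h} x_h}{\varx_h}_{X_h'\times X_h} = \dual{\mathcal{A}_{\alpha,\abeta,h} \varx_h}{x_h}_{X_h'\times X_h}$. Combining this with the defining identity for $\mathcal{P}_{\alpha,\abeta,h}$ yields
\begin{equation*}
 \inner{\mathcal{P}_{\alpha,\abeta,h}^{-1}\mathcal{A}_{\alpha,\abeta,h} x_h}{\varx_h}_{X_{\alpha,\abeta}} = \dual{\mathcal{A}_{\alpha,\abeta,h} x_h}{\varx_h}_{X_h'\times X_h} = \inner{\mathcal{P}_{\alpha,\abeta,h}^{-1}\mathcal{A}_{\alpha,\abeta,h} \varx_h}{x_h}_{X_{\alpha,\abeta}},
\end{equation*}
which is exactly the chain of equalities appearing in the proof of \autoref{th:contpcsystem}. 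Since \autoref{co:discreteisomorphism} guarantees that $\mathcal{A}_{\alpha,\abeta,h}$ is an isomorphism onto $X_h'$, the composition $\mathcal{P}_{\alpha,\abeta,h}^{-1}\mathcal{A}_{\alpha,\abeta,h}$ is a symmetric isomorphism on $X_h$.

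For the condition number I would use that, because $\mathcal{P}_{\alpha,\abeta,h}$ realizes the Riesz isometry between $(X_h,\norm{\cdot}_{X_{\alpha,\abeta}})$ and its dual, one has the norm identities
\begin{equation*}
 \norm{\mathcal{P}_{\alpha,\abeta,h}^{-1}\mathcal{A}_{\alpha,\abeta,h}}_{\mathcal{L}(X_h,X_h)} = \norm{\mathcal{A}_{\alpha,\abeta,h}}_{\mathcal{L}(X_h,X_h')} \quad\text{and}\quad \norm{\left(\mathcal{P}_{\alpha,\abeta,h}^{-1}\mathcal{A}_{\alpha,\abeta,h}\right)^{-1}}_{\mathcal{L}(X_h,X_h)} = \norm{\mathcal{A}_{\alpha,\abeta,h}^{-1}}_{\mathcal{L}(X_h',X_h)}.
\end{equation*}
The definition of the condition number then gives \autoref{eq:precon2}, and the bounds \autoref{eq:operatornorms_discrete} from \autoref{co:discreteisomorphism} yield $\kappa(\mathcal{P}_{\alpha,\abeta,h}^{-1}\mathcal{A}_{\alpha,\abeta,h}) \leq \overline{c}/\underline{c}$ uniformly in $\alpha$ and independently of $Y_h$.

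The crucial observation is that this theorem carries no genuine difficulty of its own: every step is identical to the continuous case, and finite dimensionality even removes the need to invoke the Riesz representation theorem abstractly. The real work was front-loaded into \autoref{le:discinfsup} and \autoref{co:discreteisomorphism}, where the specific choice of $\Lambda_h$ in \autoref{eq:LamDisc} was engineered precisely so that the discrete inf-sup constant $k_0$ coincides with the continuous one. Hence the only point requiring care is to confirm that the constants $\overline{c},\underline{c}$ inherited from \autoref{co:discreteisomorphism} are genuinely $\alpha$- and $h$-independent, which is exactly the content of that corollary.
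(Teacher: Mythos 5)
Your proposal is correct and follows exactly the route the paper takes: the paper's own proof is a one-line remark that the result follows from \autoref{co:discreteisomorphism} by repeating the proof of \autoref{th:contpcsystem} restricted to the subspace $X_h = Y_h\times\Lambda_h$, which is precisely the argument you spell out (Riesz/Gram-matrix identification of $\mathcal{P}_{\alpha,\abeta,h}$, symmetry via the defining identity, the two norm identities, and the uniform bound $\overline{c}/\underline{c}$ from the discrete Brezzi constants). You have simply filled in the details the paper leaves implicit, including the correct observation that the real content was front-loaded into \autoref{le:discinfsup} and \autoref{co:discreteisomorphism}.
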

This shows that our preconditioner $\mathcal{P}_{\alpha,\abeta,h}$ is \emph{robust with respect to $\alpha$ and the discretization}.
\begin{proof}
 The theorem follows from \autoref{co:discreteisomorphism} and is proven along the lines of the proof of \autoref{th:contpcsystem} restricted to the subspace $X_h=Y_h\times\Lambda_h$.
\end{proof}

In the following we use $\mathcal{P}_{\alpha,\abeta,h}^{-1}$ as a preconditioner and as a consequence the minimum residual method can be applied to 
the preconditioned discrete system of \autoref{eq:discsaddlepointproblemoperatorform}:
\begin{equation}\label{eq:sppof4}
 \mathcal{P}_{\alpha,\abeta,h}^{-1}\mathcal{A}_{\alpha,\abeta,h}\begin{pmatrix}y_h\\\lambda_h\end{pmatrix}=\mathcal{P}_{\alpha,\abeta,h}^{-1}\mathcal{A}_{\alpha,\abeta,h}\begin{pmatrix}l_h\\0\end{pmatrix}.
\end{equation}

\begin{remark}\label{re:rhorobustness}
 Note that albeit the robustness with respect to $\alpha$ and the discretization, the condition number of the preconditioned discrete system will depend on $\abeta>0$ since $\abeta$ appears in the (discrete) Brezzi constants (see \autoref{th:Brezziconstants} and \autoref{le:discinfsup}) and therefore in the constants from \autoref{eq:operatornorms_discrete}. As a result of this, the solution of \autoref{eq:sppof4} will depend on $\abeta$ as our numerical results will illustrate later on. 
\end{remark}

\section{Numerical experiments and results}\label{sec:numerics}
Our test example is Photoacoustic Tomography (PAT), as described in \autoref{pr:weak} (see \autoref{eq:patbounded}) 
on the rectangular domain $\Omega = (0,1)^d$ over the time interval $(0,T)$ with $T=\frac{1}{4}$. 
As observation domain $\Gamma$ we use the boundary of (see \autoref{fig:L})
\begin{equation} \label{eq:omega_s}
\Omega_s:= \left(\frac{1}{4},\frac{3}{4}\right)^d.
\end{equation}
The longest distance of a point $p$ in $\Omega_s$ to $\Gamma$, that is $\max\{d(p,\Gamma): p \in \Omega_s\}$, is $\frac{1}{4}$. 
Thus for information of a point to propagate to $\Gamma$ it takes $\frac{1}{4}$ time-units (because the sound speed 
equals $1$). Therefore a measurement time of $T=\frac{1}{4}$ units guarantees uniqueness \cite[Theorem 2]{SteUhl09}.

For the numerical solution we consider 2nd order B-spline spaces with equidistant knot spans and maximum continuity 
on the interval $(a,b)$, $S_{2,\ell}(a,b)$, where $\ell$ is the number of uniform refinements performed. This space 
has mesh size $h = (b-a)/2^\ell$ and smoothness $C^1(a,b)$. Moreover, the second derivative of a 2nd order spline is 
piecewise constant and thus the spline is an element in $H^2(a,b)$. Tensor product B-spline space are the tensor product of univariate B-spline spaces.

Defining 
\begin{equation*}
 Y_h := S_{2,\ell_t}(0,T)\otimes S_{2,\ell_x}(\Omega)\cap H^1_0(\Omega),
\end{equation*}
we see that because, as already stated above, every spline is two times weakly differentiable, that for all $y_h\in Y_h$
\begin{equation*}
 (\wave[y_h],\, y_h(0),\, y_h'(0))\in L^2(0,T;L^2(\Omega)) \times H^1_0(\Omega) \times L^2(\Omega).
\end{equation*}
Thus $Y_h \subset Y$ according to definition of $Y$ in \autoref{def:YY} and thus $Y_h$
is conforming. For more complex domains isogeometric analysis, see c.f. \cite{VeiBufSanVaz14,HugCotBaz05}, can be used to 
obtain smooth conformal discretization subspaces, and multi-patch domains can be dealt with with methods described in 
\cite{BirKap19} and the references within.

The space $\Lambda$ is then discretized as in \autoref{eq:LamDisc}, which gives
\begin{equation*}
 \Lambda_h = \set{ \lambda_h = (\wave[y_h], y_h'(0)) : y_h \in Y_h,\, y_h(0) = 0 }.
\end{equation*}
The resulting augmented operator $\mathcal{A}_{\alpha,\abeta,h}$ from \autoref{eq:discsaddlepointproblemoperatorform} is preconditioned with $\mathcal{P}_{\alpha,\abeta,h}$ following \autoref{th:discpcsystem},
\begin{equation*}
 \mathcal{P}_{\alpha,\abeta,h}^{-1}\mathcal{A}_{\alpha,\abeta,h}=\begin{pmatrix}P_{Y_{\alpha,\abeta,h}}^{-1} & 0\\0 & P_{\Lambda_h}^{-1}\end{pmatrix}\begin{pmatrix}A_{\alpha,\abeta,h}&B_h'\\B_h&0\end{pmatrix},
\end{equation*}
where $P_{Y_{\alpha,\abeta,h}}:Y_h\rightarrow Y_h'$ and $P_{\Lambda_h}:\Lambda_h\rightarrow \Lambda_h'$ are the matrix representations of $\inner{\cdot}{\cdot}_{Y_{\alpha,\abeta}}$ and $\inner{\cdot}{\cdot}_\Lambda$, respectively. Note that $P_{Y_{\alpha,\abeta,h}}=A_{\alpha,\abeta,h}$ by definition of the inner product $\inner{\cdot}{\cdot}_{Y_{\alpha,\abeta}}=a_{\alpha,\abeta}(\cdot,\cdot)$.

\subsection{Robustness of the preconditioner} 
In the first series of numerical experiments for Photoacoustic Tomography we study the robustness of our 
preconditioner $\mathcal{P}_{\alpha,\abeta,h}$ as introduced in \autoref{th:discpcsystem}. 

Here we use the observation surface $\Gamma=\partial\Omega_s$ (cf. \autoref{eq:omega_s}).
Condition numbers for different $\alpha$ and different numbers $\ell=\ell_t=\ell_x$ of uniform refinements are given in \autoref{t:CondEq1} and 
\autoref{t:CondEq2} for $d = 2$ and $d = 3$, respectively. 
\autoref{t:ItNumEq1} and \autoref{t:ItNumEq2} contain iteration numbers for solving the preconditioned system using the minimal 
residual method (MINRES) with zero right hand side and random initial starting vector. The stopping criteria is the reduction of 
the residual error by $10^{-8}$. As predicted from \autoref{th:discpcsystem}, the condition numbers, iteration numbers respectively, are independent of the mesh size $h$, as well as, 
the regularization parameter $\alpha$.
\begin{table}[h]
\centering 
  \begin{tabular}{| l || l | l | l | l | l |}
    \hline
    $\ell \textbackslash \alpha$   & $10^0$ & $10^{-2}$ & $10^{-5}$ & $10^{-7}$ & DoFs\\ \hline \hline
    $2$  &   2.65163 & 2.64811 & 2.64779 & 2.64770 & 176  \\  \hline   
    $3$  &   2.66313 & 2.65083 & 2.64879 & 2.64879 & 1216 \\  \hline
  \end{tabular}
  \caption{Condition numbers $\kappa\left(\mathcal{P}^{-1}_{\alpha,\abeta,h}\mathcal{A}_{\alpha,\abeta,h}\right)$ for  $d=2$ and $\abeta=1$.}
  \label{t:CondEq1}
\end{table}
\begin{table}[h]
\centering 
  \begin{tabular}{| l || l | l | l | l | l |}
    \hline
    $\ell \textbackslash \alpha$   & $10^0$ & $10^{-2}$ & $10^{-5}$ & $10^{-7}$ & DoFs\\ \hline \hline
    $2$  &   2.65112 & 2.649  & 2.64887 & 2.64587  & 704 \\  \hline
  \end{tabular}
  \caption{Condition numbers $\kappa\left(\mathcal{P}^{-1}_{\alpha,\abeta,h}\mathcal{A}_{\alpha,\abeta,h}\right)$ for $d=3$ and $\abeta=1$.}
  \label{t:CondEq2}
\end{table}

\begin{table}[h]
\centering 
  \begin{tabular}{| l || l | l | l | l | l |}
    \hline
    $\ell \textbackslash \alpha$   & $10^0$ & $10^{-2}$ & $10^{-5}$ & $10^{-7}$ & DoFs\\ \hline \hline
    $2$  &    9 &  9&  9&  9& 176  \\  \hline   
    $3$  &    9 &  9&  9&  9& 1216 \\  \hline
    $4$  &    9 &  7&  7&  7& 8960 \\  \hline
    $5$  &    7 &  7&  7&  7& 68608\\  \hline
  \end{tabular}
  \caption{Iteration numbers: $\mathcal{P}^{-1}_{\alpha,\abeta,h}\mathcal{A}_{\alpha,\abeta,h}$ for $d=2$ and $\abeta=1$.}
  \label{t:ItNumEq1}
\end{table}

\begin{table}[h]
\centering 
  \begin{tabular}{| l || l | l | l | l | l |}
    \hline
    $\ell \textbackslash \alpha$   & $10^0$ & $10^{-2}$ & $10^{-5}$ & $10^{-7}$ & DoFs\\ \hline \hline
    $2$  & 9 &  9 &  9 &  9 & 704 \\  \hline
    $3$  & 7 &  7 &  7 &  7 & 9728 \\  \hline
  \end{tabular}
  \caption{Iteration numbers: $\mathcal{P}^{-1}_{\alpha,\abeta,h}\mathcal{A}_{\alpha,\abeta,h}$ for $d=3$ and $\abeta=1$.}
  \label{t:ItNumEq2}
\end{table}

The preconditioner is not robust with respect to $\abeta$. This is reflected by the iteration numbers
in \autoref{t:ItNumEqbeta}. The number of iterations needed to reach the threshold increase significantly as $\abeta$ decreases.
\begin{table}
\centering 
  \begin{tabular}{| l || l | l | l | l |}
    \hline
    $\abeta \textbackslash \alpha$   & $10^0$ & $10^{-2}$ & $10^{-5}$ & $10^{-7}$\\ \hline \hline
    $10^{0}$  &    7  &    7 &    7 &  7   \\  \hline 
    $10^{-2}$  &  23  &   21 &   19 &  19  \\  \hline
    $10^{-5}$  &  351 &  343 &  163 &  153 \\  \hline
    $10^{-7}$  & 3039 & 2940 & 1145 &  769 \\  \hline
  \end{tabular}
  \caption{Iteration numbers: $\mathcal{P}^{-1}_{\alpha,\abeta,h}\mathcal{A}_{\alpha,\abeta,h}$ for $d=2$ and $\ell = 5$.}
  \label{t:ItNumEqbeta}
\end{table}

\subsection{Initial source recovery}
Here, we present numerical results for PAT with the preconditioning method described above using simulated data.
The ground truth, the smiley, is represent in \autoref{fig:projectp2r8}. It is constructed to be a second order 
spline with $8$ refinements, that is an element of $S_{2,8}((0,1)^2)$. 
\begin{figure}[bht]
    \centering \includegraphics[width=0.40\textwidth]{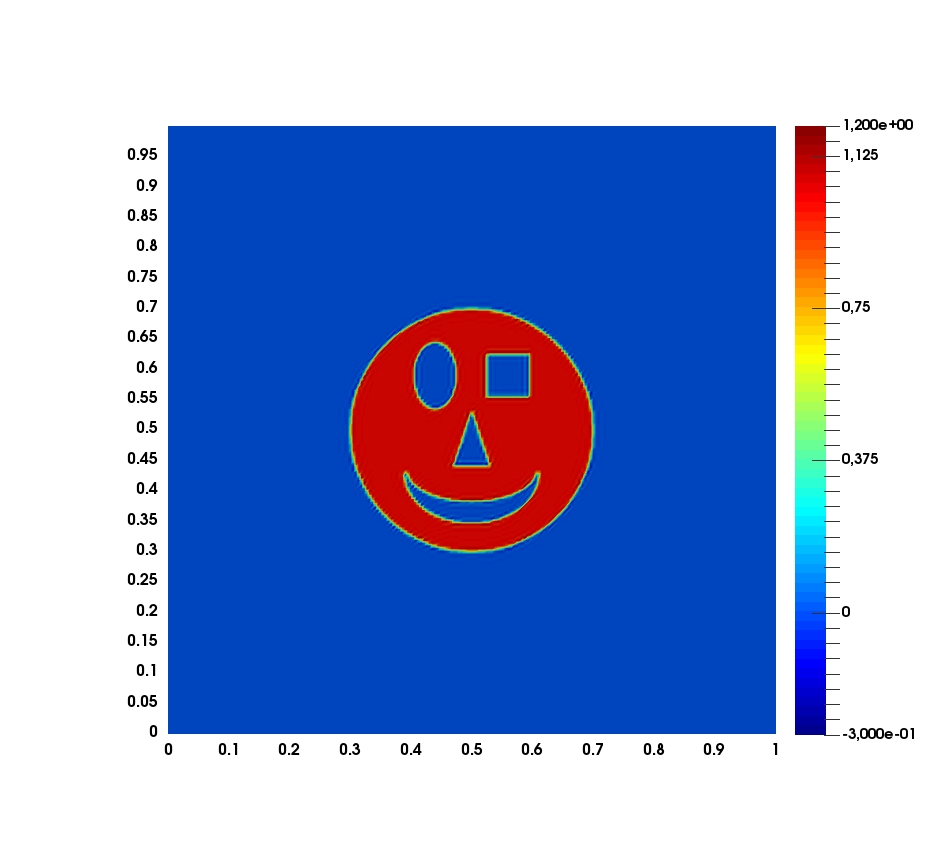}
    \caption{Ground truth $u \in S_{2,8}((0,1)^2)$.}
    \label{fig:projectp2r8}
\end{figure}
We used simulated measurement data $z_d$, which is obtained by numerically solving the wave equation with the ground truth 
with the Galerkin method in space and a finite difference method in time, with time step $h_t =  T/2^{10}$, where $T = \frac{1}{4}$.

For solving the inverse problem of PAT we discretize the augmented optimality system on the space 
\begin{equation*}
 Y_h := S_{2,6}(0,T)\otimes S_{2,6}((0,1)^2)\cap H^1_0((0,1)^2)
\end{equation*}
and $\Lambda_h$ according to \autoref{eq:LamDisc}. It is obvious that since we make only $6$ refinements, the ground truth cannot be 
recovered accurately.

The observation surface is again $\Gamma=\partial\Omega_s$ and the resulting preconditioned linear system 
of equations
\begin{equation*}
 \begin{pmatrix}P_{Y_{\alpha,\abeta,h}}^{-1} & 0\\0 & P_{\Lambda_h}^{-1}\end{pmatrix}\begin{pmatrix}A_{\alpha,\abeta,h}&B_h'\\B_h&0\end{pmatrix}\begin{pmatrix}y_h\\\lambda_h\end{pmatrix}
 =\begin{pmatrix}P_{Y_{\alpha,\abeta,h}}^{-1} & 0\\0 & P_{\Lambda_h}^{-1}\end{pmatrix}\begin{pmatrix}l_h\\0\end{pmatrix}
\end{equation*}

is solved using the MINRES method.
This is done by using sparse direct solvers for the sub-systems with the matrices $P_{Y_{\alpha,\abeta,h}}=A_{\alpha,\abeta,h}$ and $P_{\Lambda_h}$. 
This is currently the bottle neck of the numerical procedure as the direct inversion of such matrices requires a lot of memory, which limits 
us for instance to a maximum of $\ell = 6$ uniform refinements.

\autoref{fig:framea0r6b0} shows the reconstruction for $\alpha = 1.0$ and $\abeta = 1.0$. 
In \autoref{fig:framea7r6b0} we reduced $\alpha$ to $10^{-7}$ while $\abeta$ stays the same. 
Both reconstructions do not resemble the ground truth.
For obtaining the results shown in \autoref{fig:framea7r6b2}, \autoref{fig:framea7r6b5}, \autoref{fig:framea7r6b6} and \autoref{fig:framea7r6b7} 
we reduce $\abeta$ to $10^{-2}$, $10^{-5}$, $10^{-6}$ and $10^{-7}$, respectively. We see from these figures that the value $\abeta$ 
significantly effects the recovered image. Smaller values of $\abeta$ give a better recovery, however, too small values of $\abeta$ yield 
instabilities which can be observed in \autoref{fig:framea7r6b7}.
\begin{figure}[bht]
  \center
  \begin{minipage}{0.47\textwidth}
    \centering \includegraphics[width=0.85\textwidth]{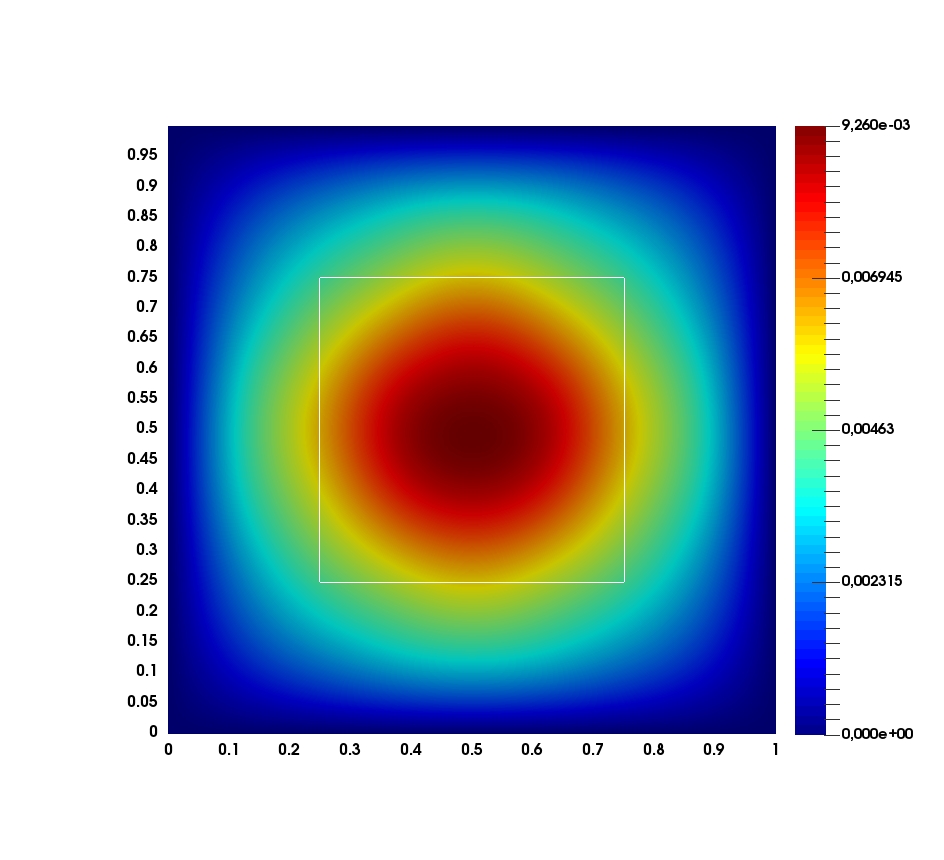}
    \caption{Recovered image: $\alpha = 1.0$ and $\abeta  = 1.0$.}
    \label{fig:framea0r6b0}
  \end{minipage}
\begin{minipage}{0.47\textwidth}
  \centering \includegraphics[width=0.85\textwidth]{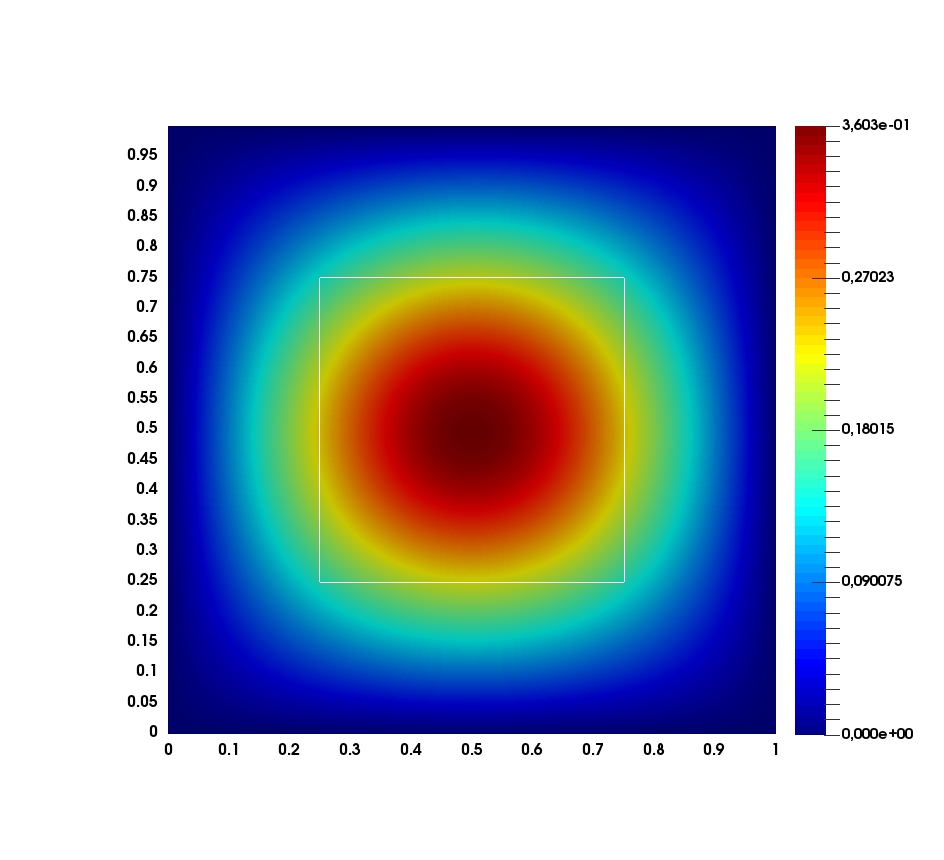}
  \caption{Recovered image: $\alpha = 10^{-7}$ and $\abeta  = 1.0$.}
  \label{fig:framea7r6b0}
\end{minipage}
\end{figure}

\begin{figure}[bht]
  \center
  \begin{minipage}{0.47\textwidth}
    \centering \includegraphics[width=0.85\textwidth]{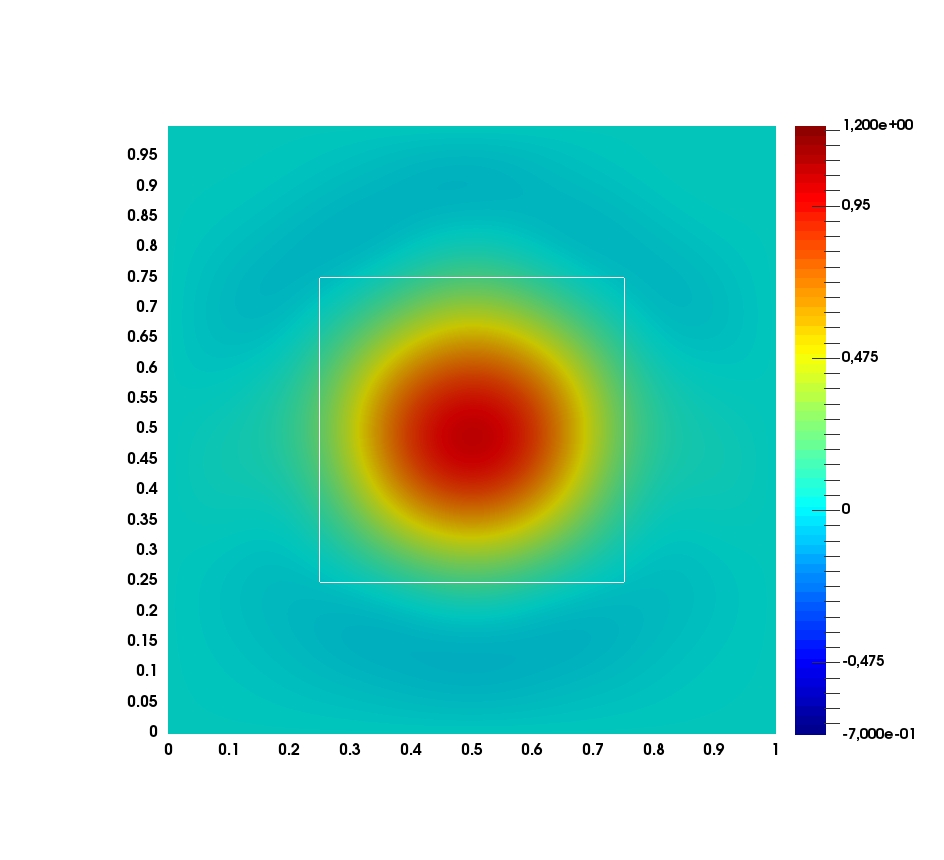}
    \caption{Recovered image: $\alpha = 10^{-7}$ and $\abeta  = 10^{-2}$.}
    \label{fig:framea7r6b2}
  \end{minipage}
\begin{minipage}{0.47\textwidth}
  \centering \includegraphics[width=0.85\textwidth]{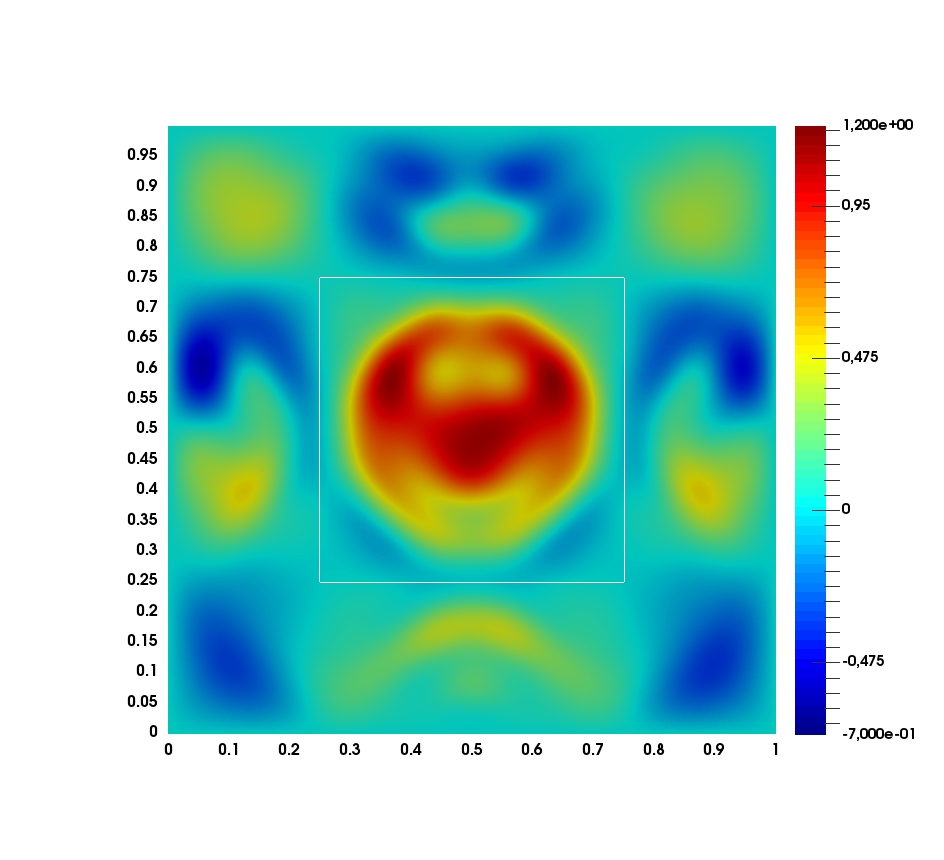}
  \caption{Recovered image: $\alpha = 10^{-7}$ and $\abeta  = 10^{-5}$.}
  \label{fig:framea7r6b5}
\end{minipage}
\end{figure}

\begin{figure}[bht]
  \center
  \begin{minipage}{0.47\textwidth}
    \centering \includegraphics[width=0.85\textwidth]{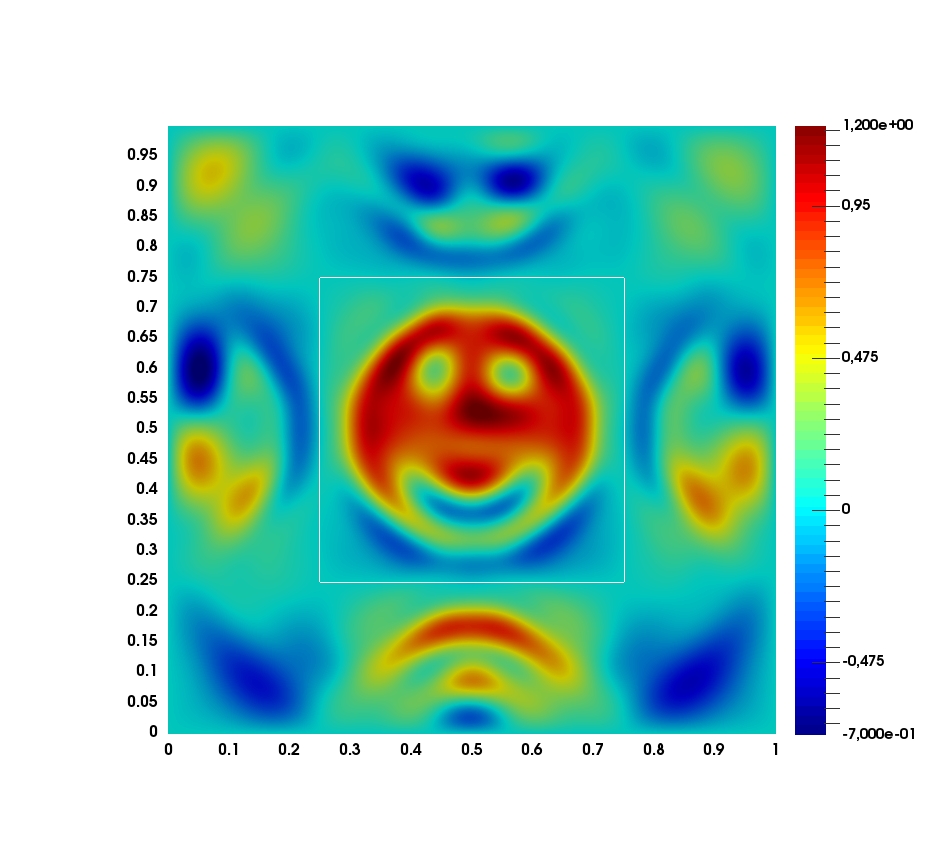}
    \caption{Recovered image: $\alpha = 10^{-7}$ and $\abeta  = 10^{-6}$.}
    \label{fig:framea7r6b6}
  \end{minipage}
\begin{minipage}{0.47\textwidth}
  \centering \includegraphics[width=0.85\textwidth]{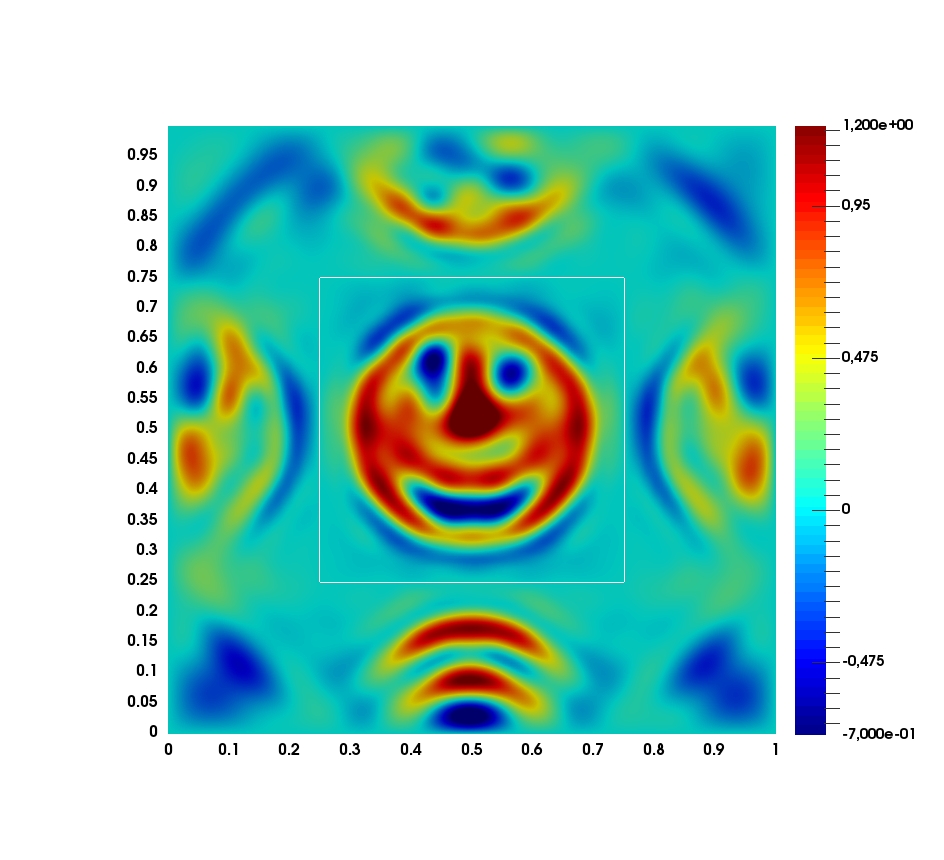}
  \caption{Recovered image: $\alpha = 10^{-7}$ and $\abeta  = 10^{-7}$.}
  \label{fig:framea7r6b7}
\end{minipage}
\end{figure}

\subsection{Discussion regarding the augmented parameter $\abeta$}
In \autoref{subsec:disc} we presented a stable discretization scheme based on an augmented Lagrangian stabilization 
and a particular choice of $\Lambda_h$.
The discrete preconditioner $\mathcal{P}_{\alpha,\abeta,h}$ from \autoref{th:discpcsystem} is not robust with respect to 
$\abeta$, see \autoref{re:rhorobustness}.
The iteration number for the preconditioned discretized augmented system $\mathcal{P}_{\alpha,\abeta,h}^{-1}\mathcal{A}_{\alpha,\abeta,h}$ 
increases as $\abeta$ goes to zero, however a small $\abeta$ is needed to recover the desired image, as shown in the figures. 
\clearpage

\section*{Acknowledgement}
OS, JS and WZ acknowledge support from the Austrian Science Fund (FWF) within the
national research network Geometry and Simulation, project S11702 and S11704. 
AB and OS are supported by I3661-N27 (Novel Error Measures and 
Source Conditions of Regularization Methods for Inverse Problems).
Moreover, OS is supported by the Austrian Science Fund (FWF), with SFB F68, 
project F6807-N36.

\section*{References}
\renewcommand{\i}{\ii}
\printbibliography[heading=none]

\end{document}